\newtheorem{thm}{Theorem}[section]
\newtheorem{defn}{Definition}[section]
\newtheorem{rem}[defn]{Remark}
\renewcommand{\S}{\mathcal{S}}
\newcommand{\C}{\mathcal{C}}
\newcommand{\R}{\mathcal{R}}
\newcommand{\T}{\mathcal{T}}
\renewcommand{\L}{\mathcal{L}}
\renewcommand{\H}{\mathcal{H}}
\newcommand{\D}{\mathcal{D}}
\newcommand{\F}{\mathcal{F}}
\renewcommand{\phi}{\varphi}
\newcommand{\RR}{\mathbb{R}}
\newcommand{\eps}{\varepsilon}
\begin{document}

\title{Toric Differential Inclusions and \\ a Proof of the Global Attractor Conjecture}

\author{
Gheorghe Craciun\\ 
Department of Mathematics and \\ 
Department of Biomolecular Chemistry\\
University of Wisconsin-Madison\\
e-mail: \texttt{craciun@math.wisc.edu}\\
\\
\\
{\tt version 2}
}

\date{\today}

\maketitle

\begin{abstract}
\noindent The  {\it global attractor conjecture} says that toric dynamical systems (i.e., a class of polynomial dynamical systems on the positive orthant) have a globally attracting point within each positive linear invariant subspace -- or, equivalently, complex balanced mass-action systems have a globally attracting point within each positive stoichiometric compatibility class.  
A proof of this conjecture implies that a large class of nonlinear dynamical systems on the positive orthant have very simple and stable dynamics. The conjecture originates from the 1972 breakthrough work by Fritz Horn and Roy Jackson, and was  formulated in its current  form by Horn in 1974.

We introduce {\it toric differential inclusions}, and we show that each positive solution of a toric differential inclusion is contained in an invariant region that prevents it from approaching the origin. 
We use this result to prove the global attractor conjecture. 
 In particular, it  follows that all detailed balanced mass action systems and all deficiency zero weakly reversible networks have the global attractor property.
\end{abstract}

\section{Introduction}

Any autonomous polynomial dynamical system\footnote{or more generally any {\em power law system}~\cite{CNP}, since we do not assume that the coordinates of the vertex points $y$ are either integer or non-negative. In particular, if vertex points have non-negative integer coordinates  we obtain {\em mass-action systems}.
} 
on the strictly positive orthant $\RR^n_+$ can be represented as 
\begin{equation}\label{polynomial}
\frac{dx}{dt} = \sum_{y \to y' \in G} k_{y \to y'} x^{y} (y' - y) 
\end{equation}
for some geometrically embedded graph\footnote{
A {\em geometrically embedded graph} $G$ is a finite directed graph whose set of vertices is a finite set $Y  \subset \RR^n$, and each edge of $G$ is represented by an oriented line segment that connects two vertices $y, y' \in Y$. See Fig.~\ref{fig:1} for examples.
} 
$G$ and some positive constants $k_{y \to y'}$, one for each edge\footnote{Inspired by notation from reaction networks, we denote an oriented edge from $y$ to $y'$ by $y \to y'$. Moreover, if $y \to y'$ is an edge in $G$ we simply write $y \to y' \in G$.
}  
$y \to y'$ of $G$. 
(Here $x^y$ denotes the monomial $\prod_{i=1}^n {x_i} ^ {y_i}$.)
Similarly, any non-autonomous polynomial dynamical system on $\RR^n_+$ can be represented as 
\begin{equation}\label{polynomial_nonaut}
\frac{dx}{dt} = \sum_{y \to y' \in G} k_{y \to y'}(t)x^{y} (y' - y) 
\end{equation}
for some nonnegative scalar functions $k_{y \to y'}(t)$.

\begin{figure}[h!]
  \begin{center}
    \begin{tabular}{c}
      \hskip-1.4cm
      \includegraphics[width=3.2in]{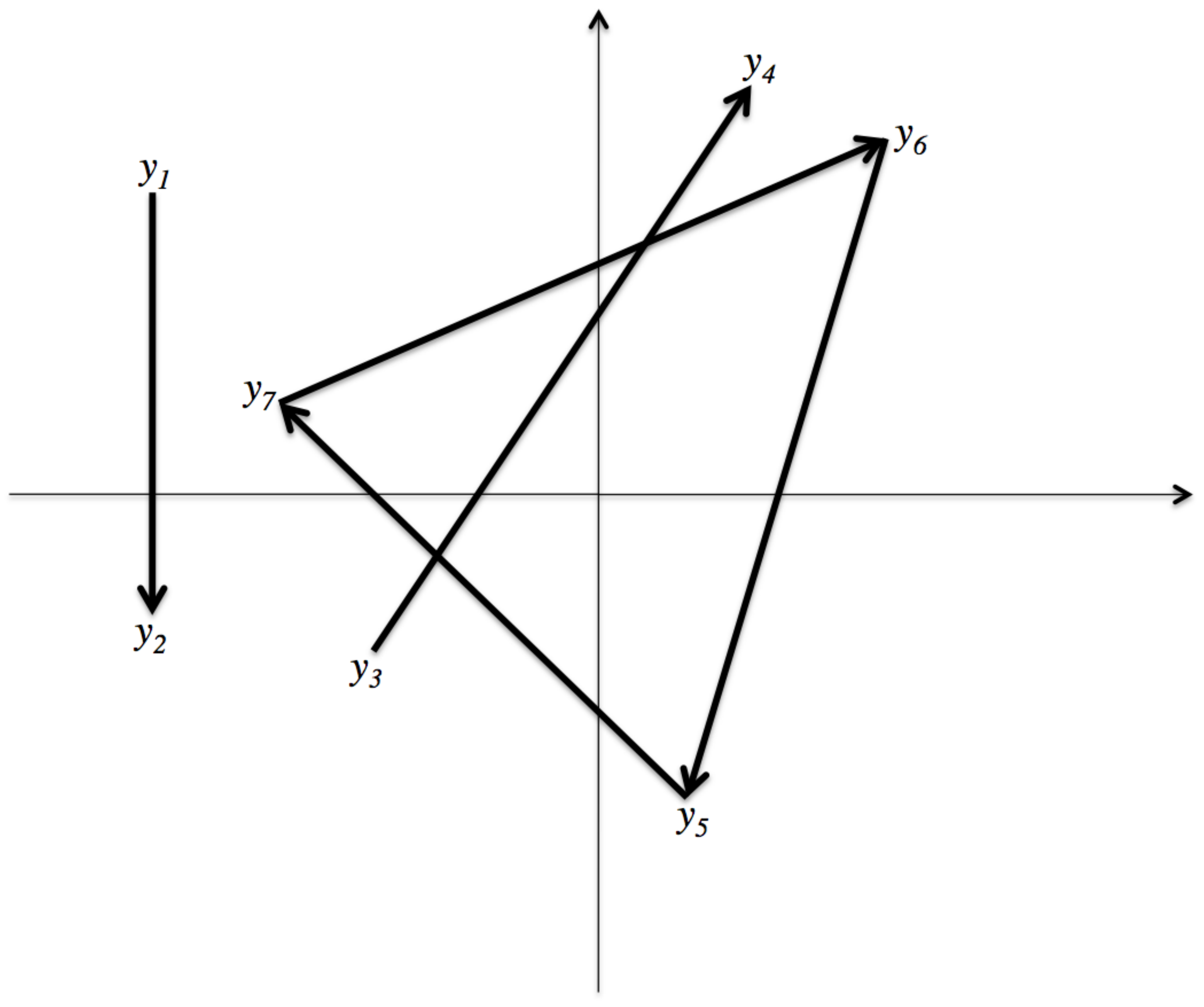}
      \includegraphics[width=3.2in]{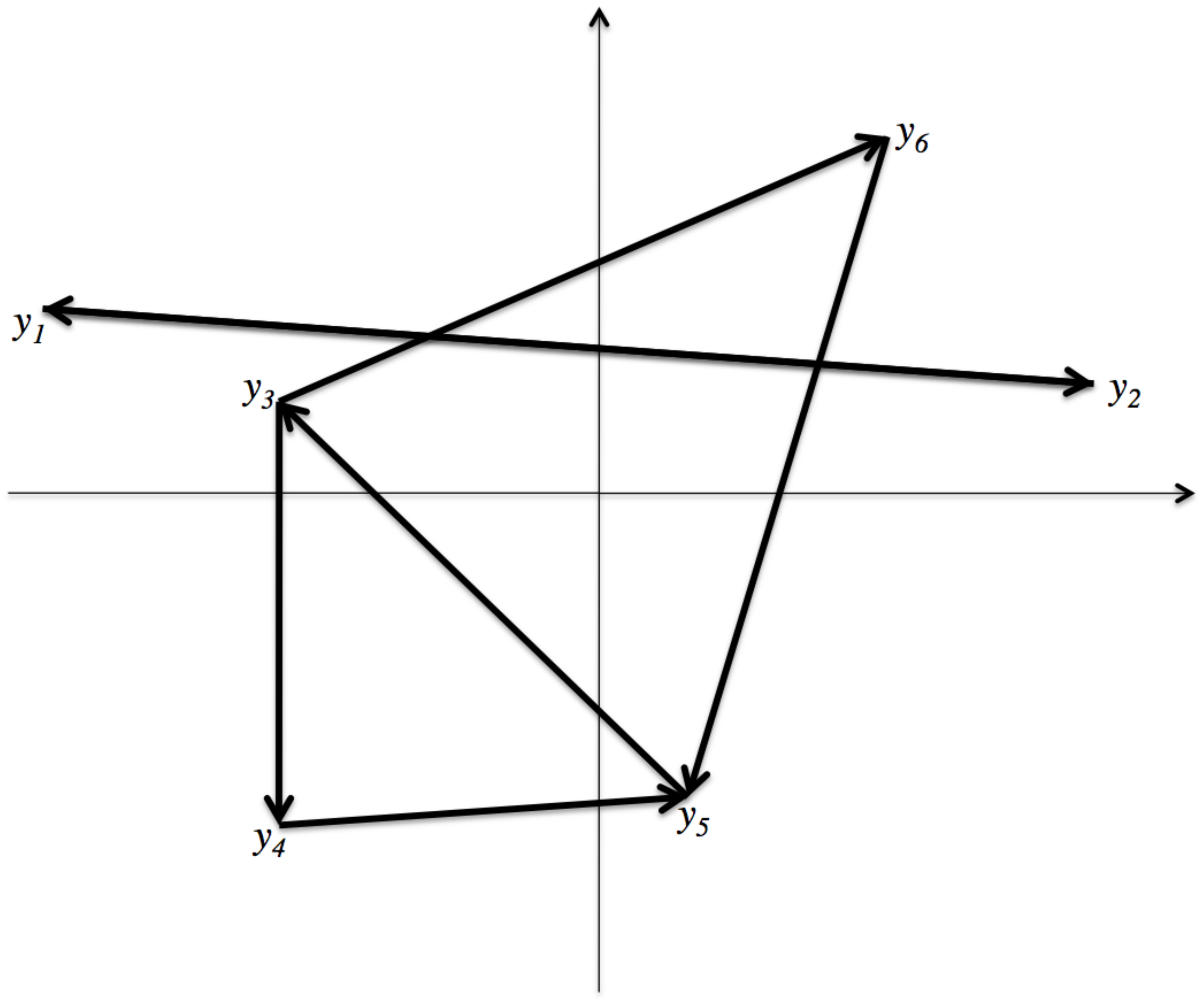}\\
      \hskip-1.4cm
      \includegraphics[width=3.2in]{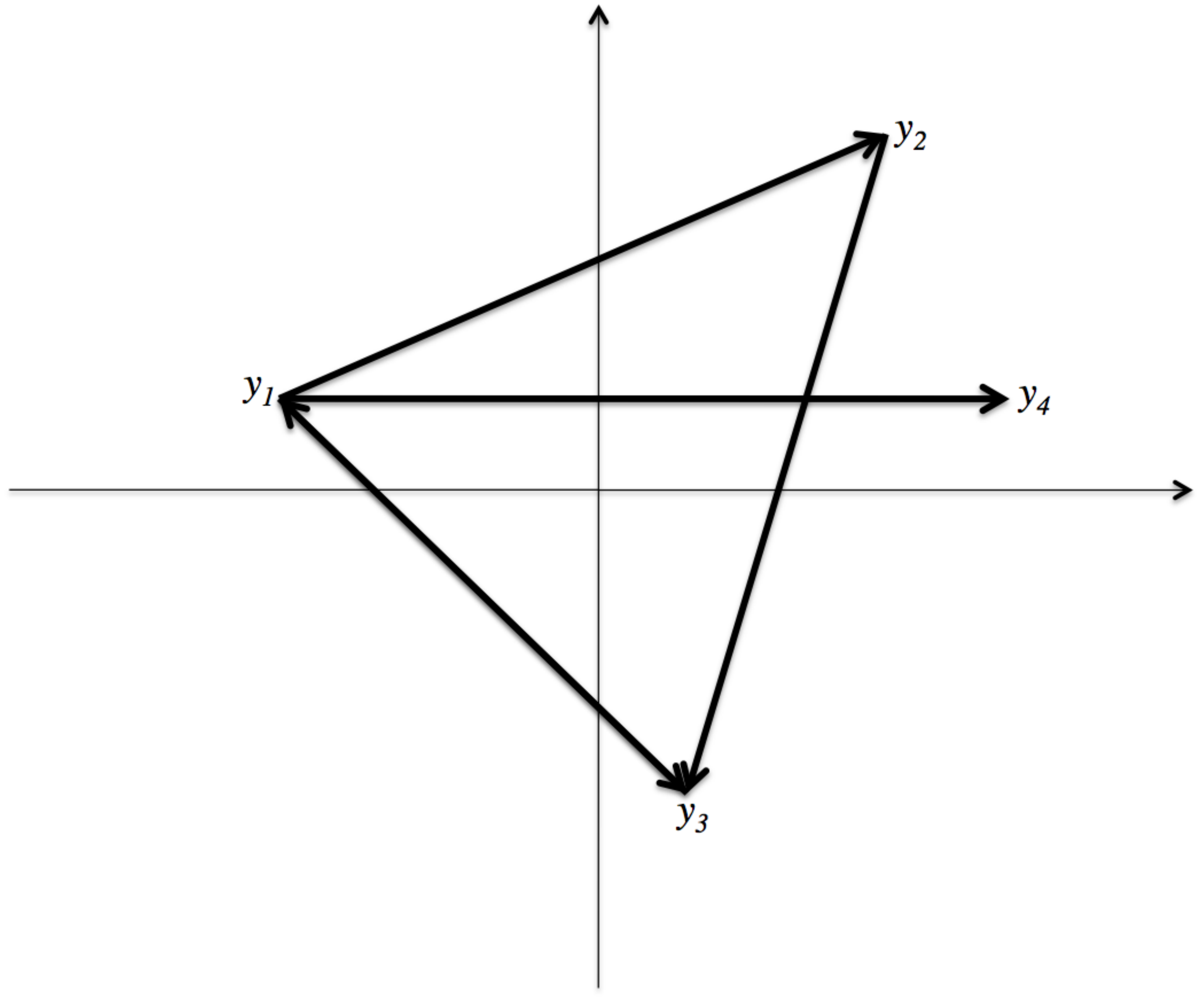}
      \includegraphics[width=3.2in]{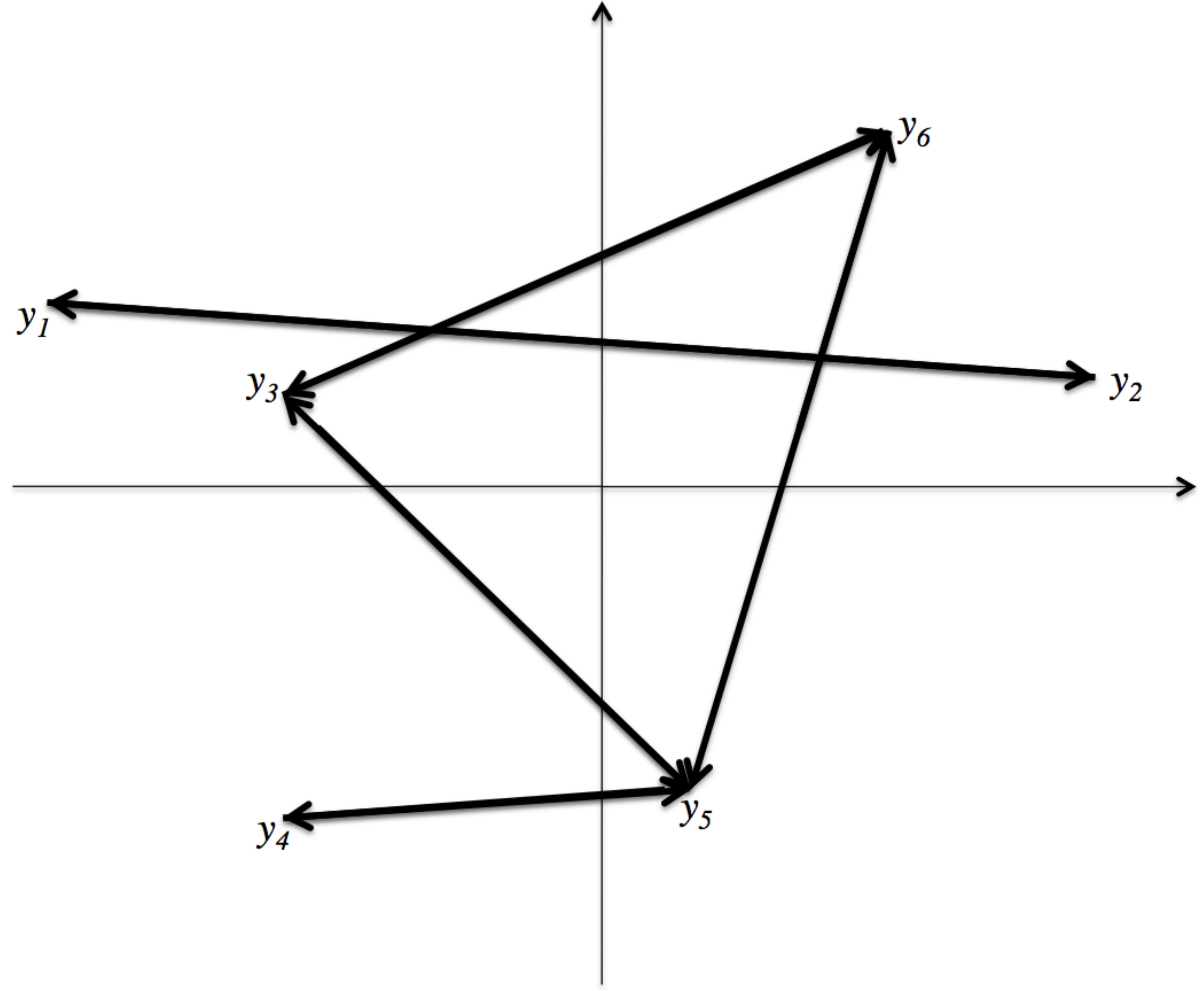}
    \end{tabular}
  \end{center}
  \caption{\label{fig:1}{\it Four examples of geometrically embedded graphs in $\RR^2$. Note that, while we {\em do} assume that the points $y_1, ..., y_k\in\RR^2$ are distinct, we do {\em not} assume that the line segments (i.e., arrows) representing the vectors $y_j - y_i$ are disjoint. The two graphs on the right are  {\em weakly reversible}, and the bottom-right graph is {\em reversible}.
Although the bottom-left graph is not weakly reversible, it generates dynamical systems~(\ref{polynomial}) which {\em can} also be represented by some weakly reversible graph. This is not true for the top-left graph.}}
\end{figure}

Positive trajectories of  (\ref{polynomial}) and (\ref{polynomial_nonaut}) are also trajectories of {\em mass-action systems}~\cite{Horn_Jackson}. 
There is great interest in understanding the {\em persistence} and {\em global stability} properties of such dynamical systems~\cite{Horn_Jackson, Horn_1974, mf72, Feinberg_1979, Feinberg_1987, Feinberg_1995, siegel_maclean, Sontag1, gunawardena, cf05, cf06, ctf06, Banaji_Craciun_2009, TDS, Anderson_Shiu_2010, ShiuSturmfels, Anderson_2011, angeli_deleenheer_sontag_2011, sf11, CNP, Banaji_2013, persistence2}. 
A natural question is the following: for which systems (\ref{polynomial}) or (\ref{polynomial_nonaut}) is it true that trajectories that start in $\RR^n_+$ stay away\footnote{
i.e., for a trajectory $x(t)$, we have $\displaystyle \liminf_{t\to\infty} x_i(t) > 0$ for all $i=1,...,n$. Then we say that the trajectory $x(t)$ is {\em persistent}.
} from the boundary of $\RR^n_+$? 

Given a polynomial dynamical system, the choice of graph $G$ above is not unique. We will see that if the graph $G$ can be chosen such that each edge of $G$ is contained in a cycle\footnote{
or, equivalently, each connected component of $G$ is strongly connected. If this is the case, we will say that the graph $G$ is {\em weakly reversible}.
}, 
then any bounded trajectory of (\ref{polynomial}) must be \emph{persistent} in $\RR^n_+$. 
More generally, this is also true for solutions of (\ref{polynomial_nonaut}), provided that the nonnegative scalar functions $k_{y \to y'}(t)$ are {\em bounded away from zero and infinity}\footnote{
i.e., there exists $\eps>0$ such that $k_{y \to y'}(t) \in [\eps, \frac{1}{\eps}]$ for all $t$.
}. 

\medskip

In this paper we prove the results mentioned above, and use them to prove the {\em global attractor conjecture}, which says that toric dynamical systems have a global attractor within any linear invariant subspace\footnote{
A {\em linear invariant subspace} of~(\ref{polynomial}) is an invariant set $\S_0 = (x_0 + S_0) \cap \RR_+^n$, where $S_0$ is a linear subset of $\RR^n$. For toric dynamical systems we will see that $S_0 = span\{y'-y:y\to y'\in G\}$ gives rise to a linear invariant subspace for any $x_0 \in \RR_+^n$.
}.
A \emph{toric dynamical system} is a polynomial dynamical system\footnote{or more generally a power law system~\cite{CNP}.
} 
for which there exists a representation of the form~(\ref{polynomial}) that admits a {\em vertex-balanced equilibrium} in $\RR^n_+$, i.e., there exists $x_0\in \RR^n_+$ such that for each vertex $\bar y\in Y$ we have
\begin{equation}\label{vertex_balanced_equil}
\sum_{y \to \bar y \in G} k_{y \to \bar y}x_0^{y}  = 
\sum_{\bar y \to y \in G} k_{\bar y \to y}x_0^{\bar y} .
\end{equation}
In other words, if we think of the positive number $k_{y \to y'}x^{y}$ as a flow from the vertex $y$ to the vertex $y'$, then condition (\ref{vertex_balanced_equil}) says that at each vertex of the graph $G$, the sum of all the incoming flows equals the sum of all outgoing flows. The identity (\ref{vertex_balanced_equil}) implies that the graph $G$ is weakly reversible~\cite{Horn_Jackson}.

The name ``toric dynamical system" has been introduced recently~\cite{TDS} to emphasize the remarkable algebraic properties of these systems, but this class of polynomial dynamical systems has been first studied in depth in the 1972 breakthrough paper of Horn and Jackson~\cite{Horn_Jackson}, where they have been called \emph{complex balanced mass-action systems}. Also in~\cite{Horn_Jackson} Horn and Jackson have shown that these systems enjoy remarkable stability properties, and in particular {\em they have a unique positive equilibrium within each linear invariant subspace, and this equilibrium is locally asymptotically stable.} 
Actually,  in~\cite{Horn_Jackson} Horn and Jackson stated that the unique positive equilibrium within each linear invariant subspace is a {\em global attractor}\footnote{
i.e., if the positive equilibrium $x_0$ belongs to the linear invariant subspace $\S_0$, then all trajectories that start in $\S_0$ converge to $x_0$.
}, 
but soon afterwards Horn explained that they have not actually proved this claim, and in 1974 he proposed this global convergence property as a conjecture~\cite{Horn_1974}. 

The conjecture that toric dynamical systems have a {\em global attractor} within each linear invariant subspace (or, in the language of Horn, that complex balanced mass-action systems have a global attractor within each reaction simplex) was later called the {\em Global Attractor Conjecture}~\cite{TDS}.

The global attractor conjecture has resisted efforts for a proof for over four decades, but proofs of many special cases have been obtained during this time, for example~\cite{siegel_maclean, Sontag1, TDS, Anderson_Shiu_2010, Anderson_2011, sf11, CNP, persistence2}.

In particular, Craciun, Nazarov and Pantea~\cite{CNP} have recently proved the three-dimensional case, and Pantea has generalized this result for the case where the dimension of the linear invariant subspaces is at most three~\cite{persistence2}. Using a different approach, Anderson has proved the conjecture under the additional hypothesis that the graph $G$ has a single connected component~\cite{Anderson_2011}, and this result has been generalized by Gopalkrishnan, Miller, and Shiu for the case where the graph $G$ is strongly endotactic~\cite{Gopalkrishnan_Miller_Shiu_2013}.

The results described above do not provide a proof of the global attractor conjecture in full generality, and also do not provide a proof for any of the following three important special cases: $(i)$ the case where $G$ is {\em reversible}\footnote{
i.e., if $y \to y' \in G$ then also $y' \to y \in G$.
}, $(ii)$ the case where $G$ is reversible and the system (\ref{polynomial}) is {\em detailed balanced}\footnote{
i.e., we replace the {\em vertex-balance} equilibrium condition (\ref{vertex_balanced_equil}) with the more restrictive {\em edge-balance} equilibrium condition: $k_{y \to \bar{y}} x_0^y = k_{\bar{y} \to y} x_0^{\bar{y}}$ for all $y \to \bar{y} \in G$. This condition says that, at the equilibrium point $x_0$, the ``forward flux" balances the ``reverse flux" for any edge in $G$.
},  and $(iii)$ the case where $G$ is reversible and $k_{y \to y'} = 1$ for all $y \to y' \in G$.

\bigskip

The goal of this paper is to introduce {\em toric differential inclusions}, and to use them to construct a proof of the global attractor conjecture in full generality. 

\bigskip

{\em Toric differential inclusions} are defined as follows. Consider a finite set $\F$ of polyhedral cones\footnote{
A {\em polyhedral cone} in $\RR^n$ is the intersection of a finite set of half-spaces of $\RR^n$.
} that cover $\RR^n$, such that $\F$ is a polyhedral fan\footnote{
A {\em polyhedral fan} in $\RR^n$ is a finite set $\F$ of polyhedral cones such that $(i)$ any face of a cone in $\F$ is also in $\F$, and $(ii)$ the intersection of two cones in $\F$ is a face of both cones. 
We say that a polyhedral fan $\F$ {\em covers} $\RR^n$ if \ $\bigcup_{C\in\F} C = \RR^n$.
Simple examples of polyhedral fans $\F_\H$ in $\RR^n$ are given by the polyhedral cones delimited by a finite set $\H$ of hyperplanes through the origin. This particular case of ``hyperplane-generated polyhedral fan" is especially relevant for motivating our definition of toric differential inclusions. 
} ~\cite{Cox_Little_Schenck_BOOK,Fulton}. For each cone $C\in \F$ denote by $C^o$ the polar cone\footnote{
The {\em polar cone} of a cone $C\in \RR^n$ is the cone $C^o = \{ y\in \RR^n | \ x \cdot y \le 0 \textrm{ for all } x\in C \}$~\cite{Rockafellar_Convex_Analysis}. 
}
of $C$. Fix some $\delta>0$. For each $x\in \RR^n$ define $F_{\F, \delta}(x)$ to be the convex cone generated by the union of polar cones $C^o$ for all $C\in\F$ such that $dist(x,C) < \delta$.
Then a {\em toric differential inclusion} is a differential inclusion on $\RR^n_+$ given by 
$$
\frac{dx}{dt} \in F_{\F, \delta}(\log x) 
$$
for some polyhedral fan $\F$ as above, and some $\delta>0$.


\bigskip

It turns to that, although the global attractor conjecture is formulated in terms of systems if the form~(\ref{polynomial}), we need to understand some properties of   systems of the form~(\ref{polynomial_nonaut}) in order to construct a proof of the conjecture. Therefore, we first prove that we can reduce some systems~(\ref{polynomial_nonaut}) to toric differential inclusions:

\smallskip

\noindent {\bf Theorem A.} \emph{If a polynomial dynamical system has a representation (\ref{polynomial_nonaut}) such that $G$ is weakly reversible and the non-negative functions $k_{y \to y'}(t)$ are bounded away from zero and infinity, then it can be embedded\footnote{
We say that the dynamical system $\frac{dx}{dt} = f(x)$ is {\em embedded} into the differential inclusion $\frac{dx}{dt} \in F(x)$ in the domain $\Omega$, if $f(x)\in F(x)$ for all $x\in\Omega$. 
} in a toric differential inclusion.}


\smallskip

In particular, it follows that if $G$ is weakly reversible, then (\ref{polynomial}) can be embedded in a toric differential inclusion for any choice of parameters $k_{y \to y'}\!>\!0$. 
We then prove that, for any toric differential inclusion $\T$ and for any positive point $x_0 \in \RR_+^n$, there exists a hypersurface $\H \subset \RR_+^n$ that separates $\RR_+^n$ into two regions, such that $0$ and $x_0$ are in different regions, and the region that contains the point $x_0$ is an invariant region for the solutions of~$\T$:

\smallskip

\noindent {\bf Theorem B.} \emph{Toric differential inclusions have an exhaustive set of zero-separating hypersurfaces.}


\smallskip

Then we use this result to conclude that positive trajectories of toric dynamical systems must stay away from the boundary of $\RR_+^n$:

\smallskip

\noindent {\bf Theorem C.} \emph{Toric dynamical systems are persistent. }


\smallskip

Finally, we are able to conclude that the global attractor conjecture is true:

\smallskip

\noindent {\bf Theorem D.} \emph{Toric dynamical systems have a globally attracting point within each positive linear invariant subspace.}


\section{Definitions and notation}

\noindent
As we mentioned in the previous section, some of the main objects of interest in this paper are non-autonomous polynomial dynamical systems of the form~(\ref{polynomial_nonaut}) such that the graph $G$ is weakly reversible, and the nonnegative scalar functions $k_{y \to y'}(t)$ are bounded away from zero and infinity. We will refer to this class of dynamical systems as {\em $k$-variable toric dynamical systems}\footnote{
The name {\em $k$-variable toric dynamical system} is due to the fact that any such dynamical system can be obtained from a toric dynamical system by using the same graph $G$, and replacing the constants $k_{y \to y'}$ with time-dependent non-negative functions $k_{y \to y'}(t)$ that are bounded away from zero and infinity. 
}. 


More precisely, a {\em $k$-variable toric dynamical system} is a dynamical system on $\RR^n_+$ of the form 
\begin{equation}\label{kvTOR}
\frac{dx}{dt} = \sum_{y\to y'\in G}k_{y\to y'}(t) \, x^{y} (y' - y) 
\end{equation}
such that $G$ is weakly reversible, and there exists a fixed positive number $\eps$ with $\eps \le k_{y \to y'}(t) \le \frac{1}{\eps}$ for all~$t$ and for all $y\to y'\in G$.

Note that, under these conditions, solutions of the $k$-variable toric dynamical system above are also solutions of the {\em differential inclusion} on $\RR^n_+$ given by 
\begin{equation}\label{DIkvTOR}
\frac{dx}{dt} \in F(x), \mathrm{\ where \ } F(x) = \left\{  \sum_{y\to y'\in G}k_{y\to y'} \, x^{y} (y' - y)  \ \ | \ \ \eps \le k_{y\to y'} \le \frac{1}{\eps} \right\}. 
\end{equation}

We will show that solutions of the differential inclusion~(\ref{DIkvTOR}) are also solutions of a type of differential inclusions with rich geometric properties, called {\it toric differential inclusions}.
Later in this section we will define the general notion of toric differential inclusions in $\RR^n_+$. 
We first define the class of {\em polar differential inclusions} in $\RR^n$.

\subsection{Polar differential inclusions}




%


If the graph $G$ is {\em reversible}, then the equations (\ref{kvTOR}) can be written as 
\begin{equation}\label{kvREV}
\frac{dx}{dt} = \sum_{y\rightleftharpoons y'\in G} \left( k_{y\to y'}(t)x^{y} - k_{y'\to y}(t)x^{y'} \right) (y' - y), 
\end{equation}
by grouping together terms given by an edge $y\to y'$ and its reverse $y'\to y$. Then, for each reversible edge $y\rightleftharpoons y'\in G$, there is now a single term in the sum (\ref{kvREV}), which can be thought of as a ``tug-of-war" between the forward and reverse terms. Indeed, both the forward and the reverse terms are trying to ``push" the state $x(t)$ of the system along the same line\footnote{
i.e., a line whose direction is given by the vector $y' - y$.
}, but in opposite directions. Then, the domain $\RR^n_+$ can be partitioned into three regions: the region where the inequality $\eps x^{y} \ge \frac{1}{\eps} x^{y'}$ holds (which implies $k_{y\to y'}(t)x^{y} \ge k_{y'\to y}(t)x^{y'}$), the region where the inequality $\frac{1}{\eps} x^{y} \le \eps x^{y'}$ holds (which implies $k_{y\to y'}(t)x^{y} \le k_{y'\to y}(t)x^{y'}$), and an {\em uncertainty region} where neither one of these two inequalities are satisfied, and either one of the two terms $k_{y\to y'}(t)x^y$ and $k_{y'\to y}(t)x^{y'}$ may win the tug-of-war, due to the fact that $k_{y\to y'}(t)$ and $k_{y'\to y}(t)$ may take any values between $\eps$ and $\frac{1}{\eps}$. 
Some concrete examples can be found in Section 3 of~\cite{CNP}. In particular, note that if one term does win the tug-of-war, then the direction of the sum of the two terms is {\em towards} the uncertainty region, and also towards the hypersurface $x^{y} = x^{y'}$ in $\RR^n_+$. 

The simplest way to understand how these regions adjoin each other is to look at them in $\RR^n$ instead of $\RR^n_+$, after applying a logarithmic transformation. If we denote $X=\log x$, then the hypersurface $x^{y} = x^{y'}$ becomes the hyperplane $(y'-y)\cdot X = 0$. Moreover, the uncertainty region described above becomes just the set of points at distance $<\delta = \frac{2 |\!\log\eps|}{||y'-y||}$ from this hyperplane. Note also that  the  direction given by the vector $y' - y$ is exactly the direction orthogonal to the hyperplane $(y'-y)\cdot X = 0$. 

Therefore, if the graph $G$ consists of a {\em single} reversible edge $y\rightleftharpoons y'$, then the dynamics of the system~(\ref{kvREV}) at a point $x$ can be described as follows: we know that $x$ will move along a line of support vector $(y'-y)$, and we are able to specify one direction or another by mapping $x$ to $X=\log x$, and checking whether the distance between $X$ and the hyperplane $(y'-y)\cdot X = 0$ is $\ge \delta$.  Moreover, if we {\em can} specify this direction, then it is always the direction {\em towards} (and not away from) this hyperplane, and orthogonal to it.

This characterization has the advantage that it can be carried over to the more general case, where $G$ contains  {\em several} reversible edges. In that case we have several tug-of-wars going on at the same time, but for each one of them we can specify the winning direction (if any) at $x$ by calculating the distance between $X=\log x$ and some hyperplane in $\RR^n$. Depending on whether $X$ falls outside an uncertainty region or not, each reversible edge $y\rightleftharpoons y'$ of $G$ contributes one or two resultant vectors (if one, then it is either $y'-y$ or $y-y'$, and if two, then they are $\pm(y'-y)$). 

If follows that any system (\ref{kvREV}) can be embedded into a differential inclusion on $\RR^n_+$ given by a set $\H$ of hyperplanes in $\RR^n$ and a number $\delta>0$, as follows. For each $x\in\RR^n_+$ we define $F_{\H,\delta}(\log x)$ to be the  convex cone generated by vectors orthogonal to the hyperplanes of $\H$, in the direction that goes from the point $X=\log x$ {\em towards} each hyperplane, and also the opposite direction if $X$ is at distance $<\delta$ from some hyperplane. If $X$ does not belong to any uncertainty region, then $F_{\H,\delta}(\log x)$ is defined to be exactly the {\em polar cone} $C^o$ of a cone $C$ bounded by a subset of hyperplanes\footnote{
the cone $C$ is the largest cone that contains the point $X$ within the polyhedral fan $\F(\H)$ determined by the set of hyperplanes $\H$. The convex cone generated by the ``outer normal" vectors orthogonal to the hyperplane faces of $C$ is its polar cone $C^o$~\cite{Fulton}.
} from $\H$. Moreover, if $X$ does belong to some uncertainty regions, then we can still describe $F_{\H,\delta}(\log x)$ in terms of polar cones, by including not just the polar of the largest cone of $\F(\H)$ that contains $X$, but also the polar of each cone of $\F(\H)$ that is at distance $\le \delta$ from $X$.

Of course, not all polyhedral fans are determined by a set of hyperplanes as above. Nevertheless, we can generalize the construction described above to define {\em polar differential inclusions} given by a polyhedral fan $\F$ in $\RR^n$, as described in the previous section: we define $F_{\F, \delta}(X)$ to be the convex cone generated by the union of polar cones $C^o$ for all $C\in\F$ such that $dist(X,C) < \delta$.

\subsection{Toric differential inclusions}

Polar differential inclusions can produce sets of cones that contain the right-hand-side of the vector fields (\ref{kvTOR}) for reversible $G$, but they are defined on $\RR^n$, while the system (\ref{kvTOR}) is defined on $\RR^n_+$. To obtain a proper generalization we need to introduce {\em toric differential inclusions}, which are obtained from polar differential inclusions by a logarithmic change of variable: a toric differential inclusion given by a polyhedral fan $\F$ in $\RR^n$ is a differential inclusion on $\RR^n_+$ given by 
\begin{equation}\label{T.D.I}
\frac{dx}{dt} \in F_{\F, \delta}(\log x) 
\end{equation}
where $F_{\F, \delta}$ is a polar differential inclusion as defined above.

Due to the way this definition is related to the system (\ref{kvREV}), it follows that $k$-variable toric dynamical systems (\ref{kvTOR}) can be embedded into polar differential inclusions, provided that the graph $G$ is reversible.

As we discussed above, for a reversible graph $G$, the polyhedral fan $\F$ could be chosen simply as the fan generated by the set of hyperplanes that are orthogonal to the vectors $y'-y$ for $y\to y' \in G$.

We need to remove this strong reversibility assumption, 
in order to be able to address the global attractor conjecture in full generality. 
In the next section we explain how to find such a polyhedral fan for any {\em weakly} reversible graph~$G$.

\section{Connection between $k$-variable toric dynamical systems and toric differential inclusions} 

As we discussed in the previous section, the simplest examples of polar differential inclusions are generated by polyhedral fans $\F_\H$ that are determined by a finite set $\H$ of hyperplanes\footnote{
i.e., $\F_\H$ consists of all the cones delimited by hyperplanes in $\H$.
} that pass through the origin. 
Then, the simplest examples of toric differential inclusions are also generated by polyhedral fans $\F_\H$ as above. We will refer to this class of toric differential inclusions as {\em hyperplane-generated toric differential inclusions}. 

We have seen in the previous section that any {\em reversible} $k$-variable toric dynamical system in $\RR^n_+$ can be embedded into a (hyperplane-generated) toric differential inclusion. 
Here we show that a similar fact is true for all $k$-variable toric dynamical systems\footnote{
In order to prove the global attractor conjecture we need to construct zero-separating surfaces for all $k$-variable toric dynamical systems (not just for the reversible ones). Theorem~\ref{thm_tor_TOR} implies that it is sufficient to construct zero-separating surfaces for toric differential inclusions.
}.

\begin{thm}
\label{thm_tor_TOR}
Consider a $k$-variable toric dynamical system~{\rm (\ref{kvTOR})}. Then this system can be embedded into a toric differential inclusion.
\end{thm}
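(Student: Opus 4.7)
The plan is to exhibit a polyhedral fan $\F$ in $\RR^n$ and a constant $\delta > 0$ such that for every $x \in \RR^n_+$ and every admissible choice of rate functions $k_{y\to y'}(t) \in [\eps, 1/\eps]$, the velocity vector on the right-hand side of \eqref{kvTOR} belongs to $F_{\F, \delta}(\log x)$. Equivalently, it suffices to show the pointwise containment $F(x) \subseteq F_{\F, \delta}(\log x)$ for the polytope of admissible velocities $F(x)$ defined in \eqref{DIkvTOR}. Once $\F$ and $\delta$ are fixed, the embedding reduces to a pointwise convex-geometric inclusion in $\RR^n$ at each $x$.

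For the fan I would take the hyperplane-generated polyhedral fan $\F = \F_\H$, where $\H = \{H_{y,y'} : y \ne y' \in Y\}$ and $H_{y,y'} = \{X \in \RR^n : (y-y') \cdot X = 0\}$; each maximal cone of $\F$ corresponds to a strict total order of the values $\{y \cdot X\}_{y \in Y}$, and each lower-dimensional cone corresponds to an order with ties. I would take $\delta$ to be a multiple of $|\log \eps|$, with the proportionality constant depending on $|Y|$ and the pairwise distances between vertices of $G$, chosen so that $\dist(X, H_{y,y'}) \ge \delta$ forces a large, quantitative multiplicative gap between $x^y$ and $x^{y'}$. The motivation mirrors the reversible-case discussion in Section~2: the hyperplane $H_{y,y'}$ is exactly the locus where $x^y$ and $x^{y'}$ agree, and $\delta$ measures how wide the ``tug-of-war'' uncertainty region should be in log coordinates.

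The heart of the proof is the pointwise containment. Fix $x$, set $X = \log x$, and partition $Y$ into \emph{tiers} by grouping vertices $y, y'$ together whenever $|(y-y') \cdot X| < \delta$; order the tiers $T_0 \succ T_1 \succ \cdots$ according to the values of $y \cdot X$. For the edges of $G$ whose source lies in the top tier $T_0$, the corresponding velocity contributions all have magnitudes within a bounded factor of one another and within a bounded factor of $x^{y_0}$ for any $y_0 \in T_0$. Using weak reversibility of $G$, each such edge lies on a cycle of $G$, and one can telescope around these cycles to rewrite the top-tier contribution as a nonnegative combination of vectors of the form $y_j - y_i$ with $y_i, y_j \in T_0$ (together with residual vectors of strictly lower tier). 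Every such vector lies in the polar cone of a face of $\F$ within distance $\delta$ of $X$, since the hyperplane $H_{y_i,y_j}$ is at distance less than $\delta$ from $X$ by definition of the tier $T_0$. One then iterates the argument for the next tier, and so on.

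The main obstacle, as I anticipate it, is the absorption step: an individual subdominant edge vector $y'-y$ with $y \notin T_0$ need not lie in any polar cone appearing in $F_{\F, \delta}(X)$, so the inclusion cannot be established edge by edge. The remedy is twofold. First, the quantitative monomial gap enforced by $\delta$ bounds the total magnitude of subdominant contributions by an arbitrarily small fraction of the top-tier magnitude, provided the constant in $\delta = C |\log \eps|$ is chosen large enough relative to $|G|$. Second, weak reversibility lets one rewrite the subdominant contribution, tier by tier, as a nonnegative combination of intra-tier difference vectors plus a residual of strictly lower tier. Making this iterative rewriting precise, ensuring it terminates after finitely many tiers, and verifying that every intermediate combination stays inside the convex cone generated by the polar cones of $\F$ at distance less than $\delta$ from $X$, is the technical crux of the theorem.
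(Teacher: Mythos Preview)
Your choice of the hyperplane-generated fan $\F_\H$ and the scale $\delta \sim |\log\eps|$ match the paper's construction exactly. Where your plan diverges is in the organization of the pointwise argument: you propose a tier-by-tier decomposition of the vertex set together with an ``absorption'' of subdominant contributions, whereas the paper decomposes $G$ into directed cycles and proves the containment \emph{exactly} for each cycle, with no absorption step at all.

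The absorption step you flag as the ``technical crux'' is, as you suspect, a genuine gap. The target $F_{\F,\delta}(X)$ is a closed convex cone, and cones do not absorb small perturbations unless the base vector lies in the interior. Your top-tier contribution need not be interior: if the top tier is a single vertex $v$ with a single outgoing edge, the dominant contribution lies on an extreme ray of $C^o$, and an arbitrarily small residual in a bad direction throws you out. There is no uniform lower bound on how far interior the dominant part is, so a pure magnitude comparison cannot close the argument. Your second remedy (tier-by-tier rewriting into intra-tier differences) runs into the problem that for lower tiers $T_k$ with $k\ge 1$, the hyperplanes $H_{y,y'}$ with $y,y'\in T_k$ need not be within $\delta$ of $X$, so intra-$T_k$ difference vectors need not belong to $F_{\F,\delta}(X)$ at all.

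The paper sidesteps this completely. Write $G$ as a union of directed cycles (possible by weak reversibility) and treat each cycle $y_1\to y_2\to\cdots\to y_r\to y_1$ on its own. For $X=\log x$ outside every uncertainty region, reorder the vertices of \emph{this one cycle} as $v_1,\ldots,v_r$ according to the values $v_l\cdot X$; then each consecutive gap $v_{l+1}-v_l$ lies in $C^o$. Telescope each edge vector $y_{i+1}-y_i$ as a signed sum of these gaps and regroup by $l$ to obtain $\sum_l \Phi_l\,(v_{l+1}-v_l)$. The key combinatorial point is that a directed cycle crosses every ``level'' $l$ equally many times in each direction, so each $\Phi_l$ is a difference of \emph{equally many} monomial terms $k_i x^{y_i}$; the monomial ordering forced by $X\in C$ then makes every $\Phi_l$ have the correct sign \emph{exactly}. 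Hence the cycle's contribution lies in $C^o$ on the nose, and summing over cycles finishes the case. For $X$ inside an uncertainty region one projects orthogonally to the maximal linear subspace of $F_{\F,\delta}(X)$ and repeats the same argument in lower dimension.

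In short: your tier picture captures the right monomial ordering, but the decomposition that makes the argument close cleanly is by \emph{cycles of $G$}, not by \emph{tiers of $Y$}. The equal-crossing property of a cycle is exactly what turns the telescoping identity into a nonnegative combination, and once you have that, the absorption problem you identified simply never arises.
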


\begin{proof}
Assume first that the weakly reversible graph $G$ is made up of a single oriented cycle. 
If our single-cycle graph is given by $y_1 \to y_2 \to ... \to y_r \to y_1$, then the $k$-variable toric dynamical system it generates is of the form 
\begin{equation}\label{MAcycle}
\frac{dx}{dt} = \sum_{i=1}^r k_i(t) \, x^{y_i} (y_{i+1} - y_i), 
\end{equation}
where $y_{r+1} = y_1$ and $\eps_0 \le k_i(t) \le \frac{1}{\eps_0}$ for some ${\eps_0}>0$.

Consider the set $\L$ of lines through the origin in the direction of vectors $y_i - y_j$ for all $i \neq j$, and denote by $\H$ the set of all hyperplanes that are orthogonal complements of lines in $\L$\footnote{i.e., a hyperplane belongs to $\H$ iff it is orthogonal to a line in $\L$.
}. 
Denote by $\F_\H$ the polyhedral fan generated by the set of hyperplanes $\H$, and, for $\delta>0$, denote by $\T_{\H,\delta}$ the corresponding hyperplane-generated toric differential inclusion. We will show that there exists $\delta_0>0$ such that the single-cycle $k$-variable toric dynamical system~(\ref{MAcycle}) is embedded in the toric differential inclusion $\T_{\H,\delta_0}$. 

Choose $\delta_0>0$ large enough such that the uncertainty regions given by the {\em reversible} edges $y_i\rightleftharpoons y_j$ and $\eps_0$ are contained within the uncertainty regions of the toric differential inclusion $\T_{\H,\delta_0}$. 

Fix a point $x\in\RR_+^n$ such that $X = \log x$ belongs to some cone $C$ in $\F_\H$, and does {\em not} belong to any uncertainty region of $\T_{\H,\delta_0}$. In particular, it follows that the cone $C$ has dimension $n$, otherwise $X$ would be contained in some hyperplane in $\H$, which in turn would be contained in an uncertainty region of $\T_{\H,\delta_0}$. We want to show that the right-hand-side of~(\ref{MAcycle}) is contained in the dual cone $C^o$. 


Consider a vector $w$ in the interior of $C$, and project $y_1, y_2 ,..., y_r$ on the line $l_w$ that passes through the origin in the direction given by $w$. Then no two projections are the same, because $w$ does not belong to any of the hyperplanes in $\H$. 
We now give a second set of names to the vectors $y_1, y_2 ,..., y_r$, say $v_1, v_2 ,..., v_r$, to record the {\em distance to the origin} of the projections along the line (i.e., if the projection of $y_{i_1}$ is the closest to origin, then $v_1 = y_{i_1}$, and if the projection of $y_{i_2}$ is the second closest to origin, then $v_2 = y_{i_2}$,  and so on). Note that, since the interior of $C$ is disjoint from all the hyperplanes in $\H$, it follows that the new names $v_1, v_2 ,..., v_r$ do {\em not} depend on the particular choice of vector $w$ in the interior of $C$. In other words, the dot products 
$
(v_{l+1}-v_l) \cdot w
$
are all {\em negative} numbers, for all $w$ in the interior of $C$.
Therefore, the vectors $v_2-v_1, v_3-v_2, ..., v_r-v_{r-1}$ belong to $C^o$. 

So, in order to show that the right-hand side of (\ref{MAcycle}) is included in $C^o$, it is enough to show that it can be written as a positive linear combination of the vectors $v_2-v_1, v_3-v_2, ..., v_r-v_{r-1}$. 

Note that $(i_1,i_2,...,i_r)$ is a permutation of $(1,2,...,r)$. If we denote the inverse permutation by $(j_1,j_2,...,j_r)$, it follows that $y_1 = v_{j_1}$, $y_2 = v_{j_2}$, and so on.

Then we have $y_2 - y_1 = v_{j_2} - v_{j_1}$. If $j_2 > j_1$ we write
$$
y_2 - y_1 = \sum_{l=j_1}^{j_2-1} (v_{l+1} - v_l),
$$
and if $j_2 < j_1$ we write 
$$
y_2 - y_1 = -\sum_{l=j_2}^{j_1-1} (v_{l+1} - v_l),
$$
We do the same for $y_3 - y_2, \ y_4 - y_3$, and so on. 
This way, we write each difference $y_{i+1} - y_i$ from the right-hand-side of~(\ref{MAcycle}) in terms of the vectors $\pm(y_{l+1} - y_l)$, with $l=1,2, ...,r-1$. 
Therefore we can re-group terms to obtain
\begin{equation}\label{MAcycle_new}
\frac{dx}{dt} = \sum_{i=1}^{r-1} \Phi_l (v_{l+1} - v_l), 
\end{equation}
where $\Phi_l$ is a sum of several terms of the form $k_i \, x^{v_i}$, with various signs. 

Note now that the positive terms inside $\Phi_l$ correspond to edges of the form $v_{m} \to v_{n}$ with $m \le l < n$, and negative terms inside $\Phi_l$ correspond to edges of the form $v_{m} \to v_{n}$ with $n \le l < m$. This means that the positive terms inside $\Phi_l$ contain $k_i \, x^{v_i}$ with $i \le l$, and the negative terms inside $\Phi_l$ contain $k_i \, x^{v_i}$ with $i > l$. Since $\log x \in C$  (and is not in an uncertainty region), and due to our choice of $\delta_0$, it follows that $k_1 x^{v_1} < k_2 x^{v_2} < ... < k_r x^{v_r}$. Therefore, the sum of the positive terms inside $\Phi_l$ dominates the sum of the negative terms inside $\Phi_l$, for each $l$\footnote{
note that the number of positive terms inside $\Phi_l$ is the same as the number of negative terms inside $\Phi_l$, because the graph $G$ is a cycle.
}. In conclusion, the right-hand-side of~(\ref{MAcycle_new}) (and therefore~(\ref{MAcycle})) is a positive linear combination of the vectors $v_{l+1} - v_l$, for $1 \le l \le r-1$, so it belongs to $C^o$.

\bigskip

If $x$ {\em does} belong to an uncertainty region of the toric differential inclusion $\T_{\H,\delta_0}$, then denote by $F(x)$ the cone of $\T_{\H,\delta_0}$ at $x$.  It follows that the maximal linear subspace contained in $F(x)$ has dimension $\ge 1$ in $\RR^n$. We project the problem on the orthogonal complement of that subspace, and then we reason the same way as above\footnote{
When projecting on the smaller dimensional subspace, the graph $G$ is replaced by its projection $\tilde G$. All vertices of $G$ that project to the same vertex of $\tilde G$ are interchangeable with each other when checking that the right-hand-side of~(\ref{MAcycle}) is contained in the (degenerate) cone $F(x)$, because the projections are done along linear subspaces contained in $F(x)$.
}. This gives us the desired conclusion for the case when $G$ is made up of a single oriented cycle.

\bigskip

If the weakly reversible graph $G$ is {\it not} a single oriented cycle, then we write it as a union of cyclic graphs, $\displaystyle G = \bigcup_{i=1}^g G_i$, and we can argue as above for each such $G_i$. We obtain that the $k$-variable toric dynamical systems given by the cycle $G_i$ are embedded in toric differential inclusions generated by some set of hyperplanes $\H_i$.
Note now that the right-hand-side of a $k$-variable toric dynamical system given by $G$ can be decomposed into a sum of terms, such that each term is of the form\footnote{
We may have to use smaller $\eps_i$ values for the terms in the decomposition, because the same edge of $G$ may belong to several graphs $G_i$.
} given by the right-hand-side of a $k$-variable toric dynamical system determined by $G_i$. 
Then we conclude that any $k$-variable mass-action system given by $G$ can be embedded into a toric differential inclusion generated by the set of hyperplanes~$\displaystyle \H = \bigcup_{i=1}^g \H_i$.
\end{proof}


\section{Construction of zero-separating surfaces for toric differential inclusions} 

In this section we show that for any toric differential inclusion $\T$ on $\RR_+^n$ and any small enough neighborhood $V_0$ of the origin in $\RR^n$ there exists a {\em zero-separating surface}, i.e., a hypersurface $Z_{\T,V_0}$ such that $\RR_+^n \setminus Z_{\T,V_0}$ is the union of two disjoint and connected open sets $Z_{\T,V_0}^0$ and $Z_{\T,V_0}^1$, such that $Z_{\T,V_0}^0 \subset V_0 $, $Z_\T^1$ is an invariant region of $\T$, and the closure of $Z_\T^1$ does not contain the origin.

Then, it will follow that any solution of $\T$ with initial condition $x_0\in\RR_+^n$ must be contained in some invariant region $Z_\T^1$ as described above, and in particular it cannot have an $\omega$-limit point at the origin.

\begin{rem}
Note that, if we want to construct zero-separating surfaces for toric differential inclusions in $\RR_+^n$, it is actually sufficient to focus on hyperplane-generated toric differential inclusions, because for any  toric differential inclusion $\T_1$ there exists a hyperplane-generated toric differential inclusion $\T_2$, such that $\T_1$ is embedded in $\T_2$ (for example, we can choose $\T_2$ to be generated by the set of hyperplanes $\H$ that consists of all the support hyperplanes of the $(n-1)$-dimensional faces of the cones of $\T_1$). Therefore, any zero-separating surface for $\T_2$ will also be a zero-separating surface for $\T_1$. 
\end{rem}

\begin{rem}
Similarly, it is also sufficient to focus on toric differential inclusions generated by a set of hyperplanes $\H$ such that the $span(\L)=\RR^n$, where $\L$ is the set of orthogonal lines for the hyperplanes in $\H$. Indeed, if a toric differential inclusion $\T$ has $span(\L) \neq \RR^n$, then we can add to the set $\H$ one or more hyperplanes to obtain $\tilde\H$ such that $span(\tilde\L) = \RR^n$. Now notice that the cone $\tilde F(x)$ of the new toric differential inclusion $\tilde\T$ contains the cone $F(x)$ of the old toric differential inclusion $\T$, for all $x\in\RR^n_+$. The inclusion relationship between the cones is due to the fact that all generators of a cone of $\T$ are contained among the generators of a corresponding cone of $\tilde\T$ (actually, the old cones are exactly the intersections between the new cones and the space $span(\L)$). Therefore $\T$ is embedded in $\tilde\T$. 
\end{rem}

\subsection{One-dimensional toric differential inclusions}

Consider a toric differential inclusion denoted $\T_1$ in $\RR_+$. Then, close enough to the origin, $\T_1$ is simply the differential inclusion $\frac{dx}{dt} \ge 0$. In this case a zero-separating surface is just a point $P_0 \in \RR_+$ such that the cone of $\T_1$ at $P_0$ points away from zero. But, any point in $\RR_+$ that is close enough to zero has this property.

\begin{figure}[h!]
  \begin{center}
    \begin{tabular}{c}
      \hskip-0.8cm
      $(a)$ \includegraphics[width=2.4in]{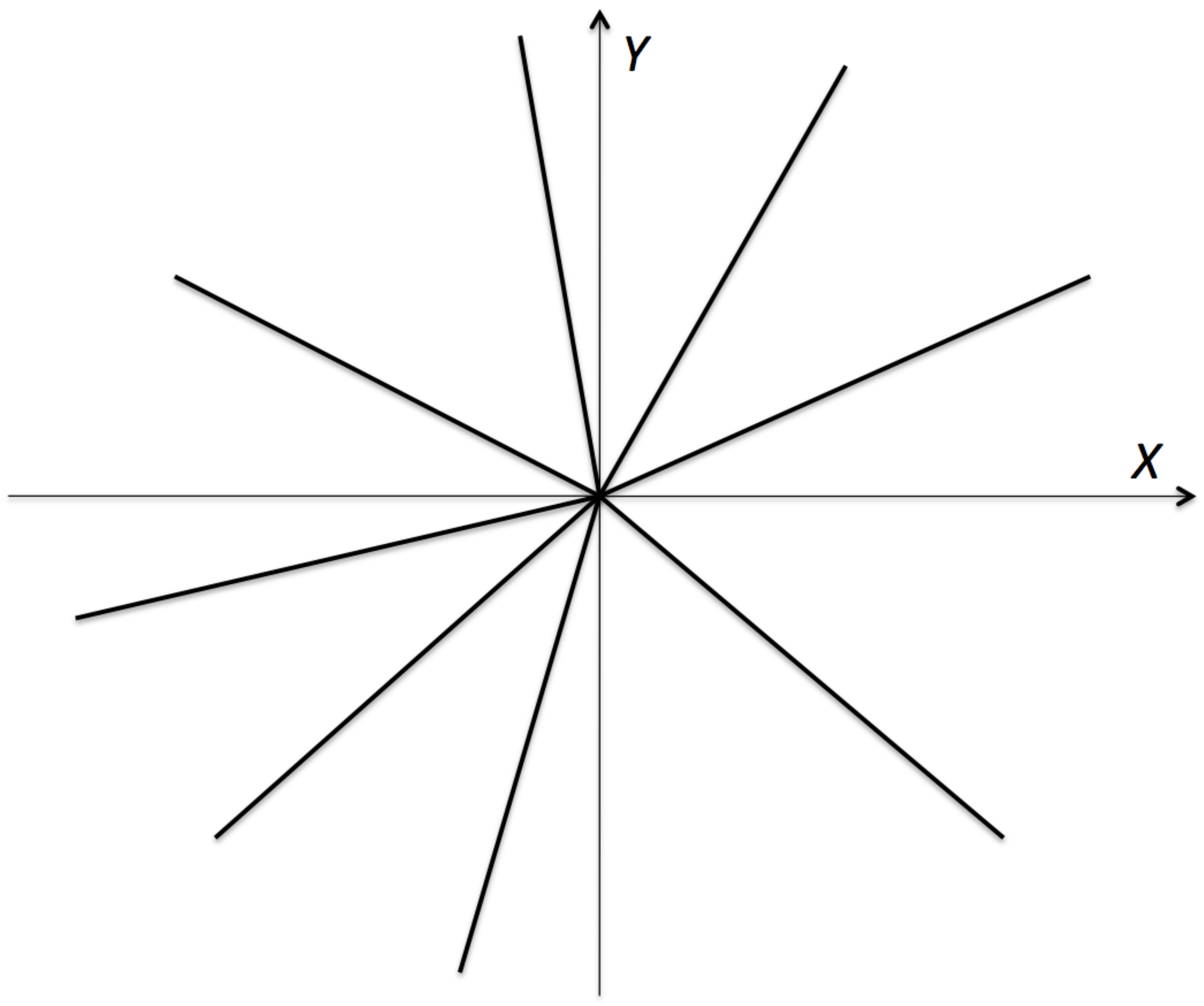} \ \ \ \ 
      $(b)$ \includegraphics[width=2.4in]{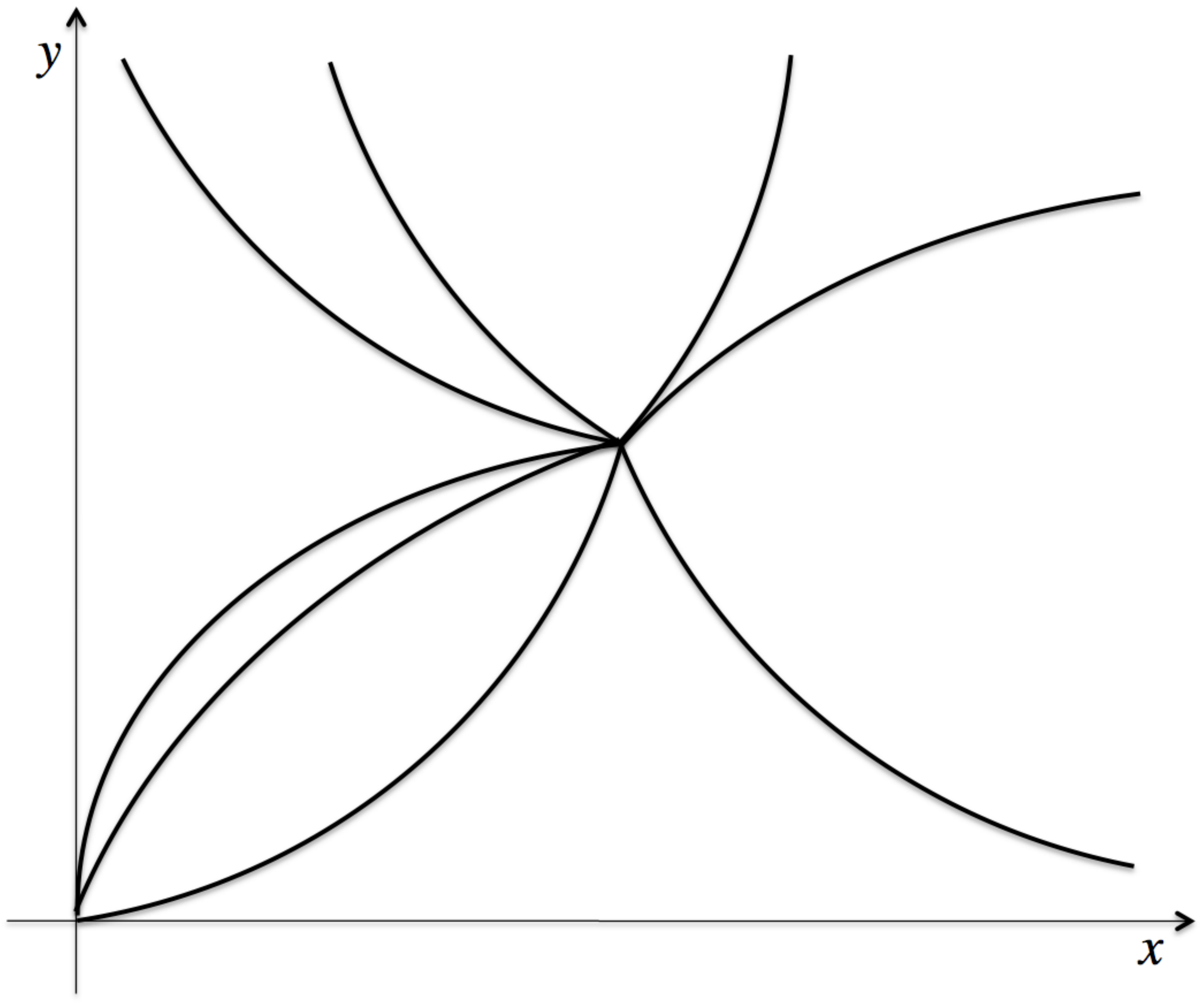}\\
      \hskip-0.8cm
      $(c)$ \includegraphics[width=2.4in]{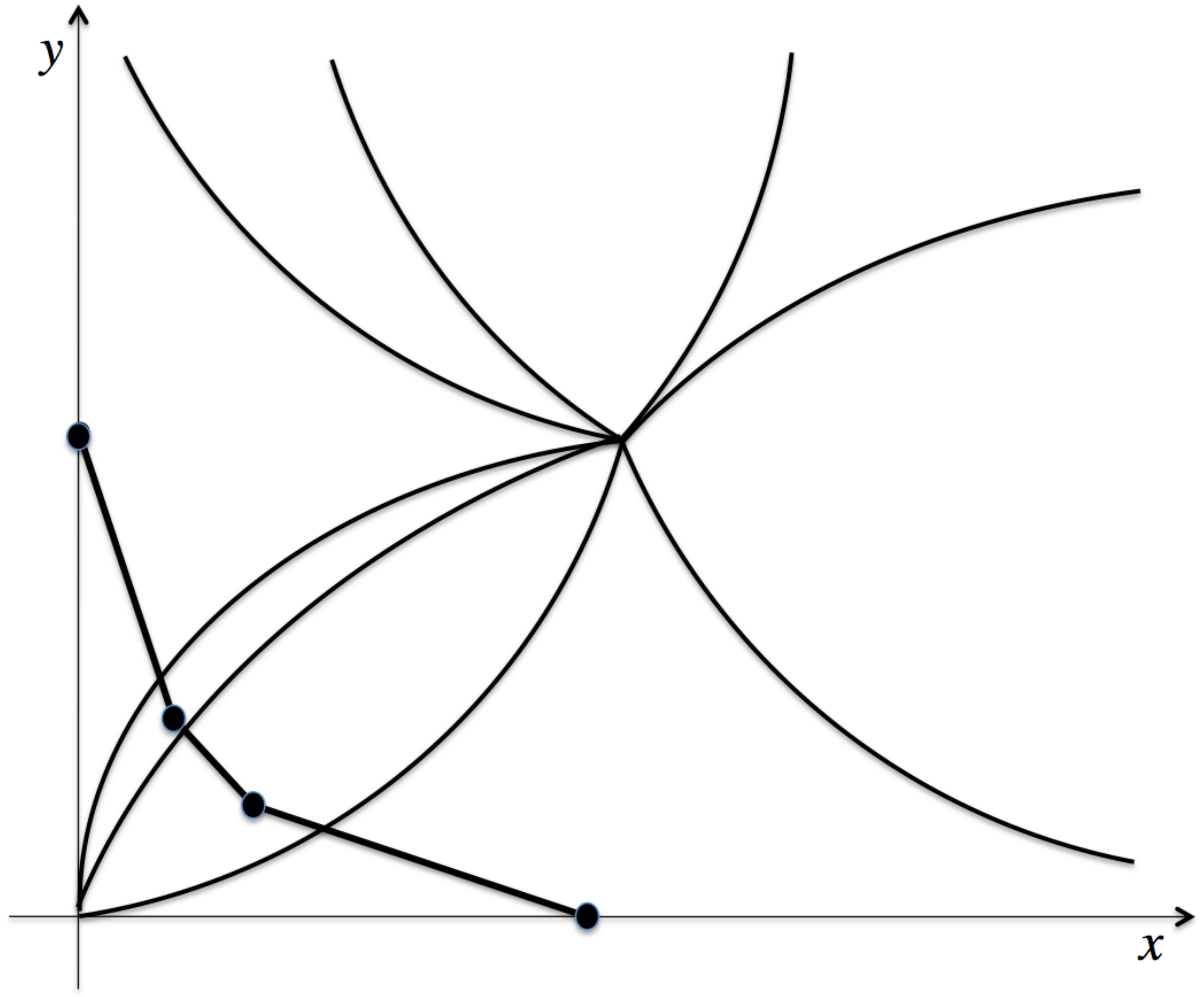} \ \ \ \ 
      $(d)$ \includegraphics[width=2.4in]{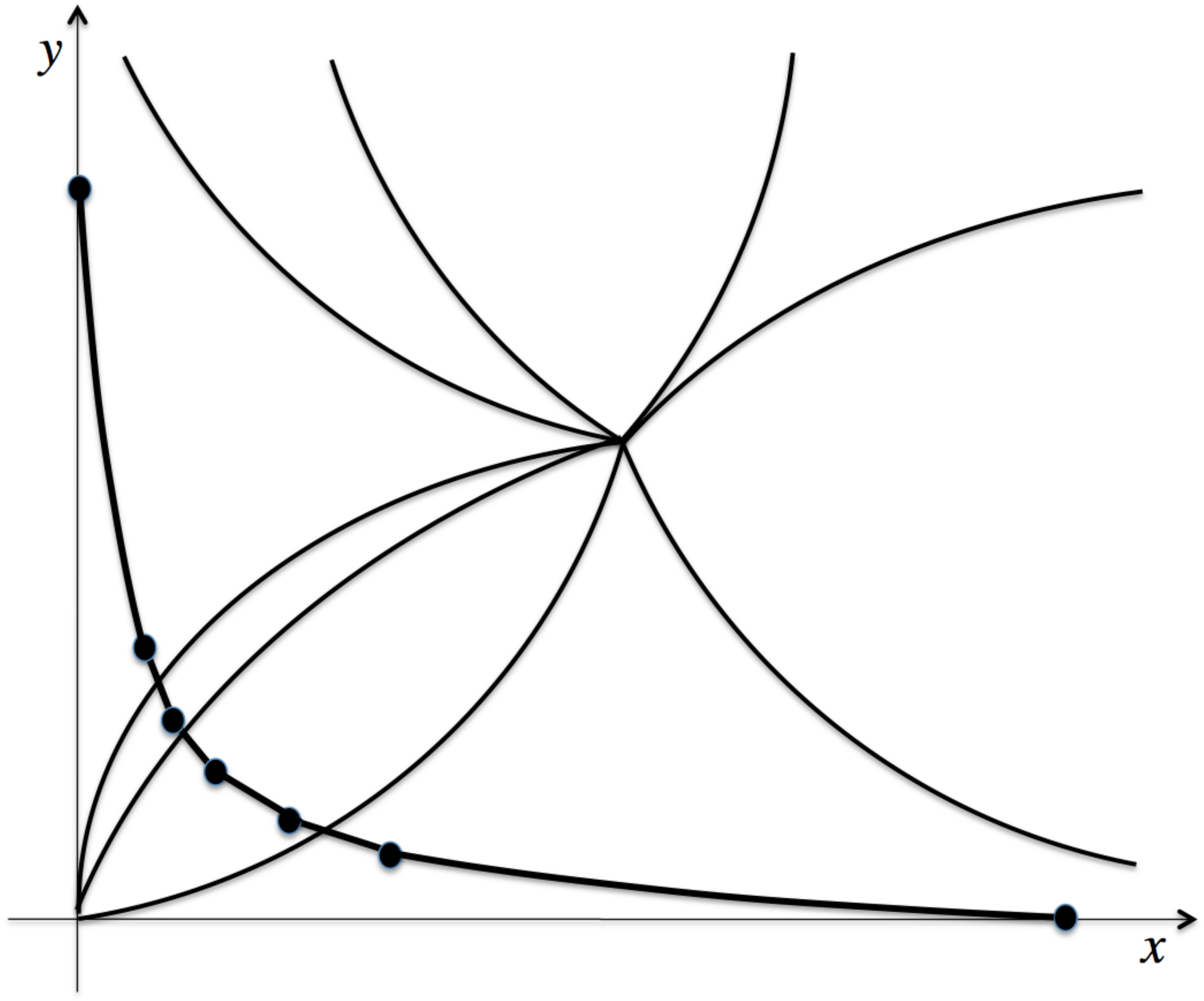}
    \end{tabular}
  \end{center}
  \caption{\label{fig:2}{\it A polyhedral fan in $\RR^2$~(a) gives rise to a toric differential inclusion in $\RR^2_+$~(b). Neighborhoods of the curves shown in~(b) delimit the {\em uncertainty regions} of the toric differential inclusion. We do not see these neighborhoods in (b), but we should imagine that each curve in (b) has some nonzero thickness that represents its uncertainty region, and the cone of the toric differential inclusion within that uncertainty region is a half-plane (each such half-plane is the polar cone of a half-line in (a)). This imposes that the zero-separating curve shown in~(c) crosses these curves along line segments of specified slope; for each crossing of a curve in (c) the line segment must be orthogonal to the corresponding half-line in the 3rd quadrant of (a). In (d) we see that we may change the slope of the zero-separating curve outside the uncertainty regions. This is needed in order to obtain a {\em faithful} zero-separating curve. See also Section 3 in~\cite{CNP} for related examples discussed in more detail.}} 
\end{figure}

\subsection{Two-dimensional toric differential inclusions}

Consider a toric differential inclusion denoted $\T_2$ in $\RR_+^2$. Then, without loss of generality we can assume that $\T_2$ is determined by a set of half-lines that start at the origin and bound cones in $\RR^2$ that can be used to generate $\T_2$. More exactly, for each such cone $C$ its exponential image $exp(C)$ is a subset\footnote{
we will refer to such as set $exp(C)$ as an {\em exp-cone} of $\T_2$.
} of $\RR_+^2$ where $\T_2$ is constant and equal to the polar cone $C^o$, except near the boundary of $exp(C)$, where, far enough from the point $(1,1)$ $\T_2$ is a half-space, and near the point $(1,1)$ $\T_2$ is the whole space $\RR^2$. 
Note that (as we can see in Fig.~\ref{fig:2}) in the construction of a zero-separating surface for $\T_2$ only the half-lines contained in the 3rd quadrant of $\RR^2$ will really play a role. 

There are many different ways to construct a zero-separating surface (which in this case is a zero-separating {\em curve}) for $\T_2$. The simplest way is to construct a polygonal line that connects a point on the $x$-axis to a point on the $y$-axis, in such a way that there exists exactly one polygonal vertex within each bounded exp-cone of $\T_2$, and the direction of each line segment follows the attracting direction of the (single) {\em uncertainly region} that it intersects (see Fig.~\ref{fig:2}$(c)$).

The construction described above is essentially the same as the construction of an invariant region in~\cite{CNP}, but without having to continue the polygonal line all around the point (1,1) to obtain an invariant polygon. 

On the other hand, note that we could have built the polygonal line above by using less stringent requirements on its line segments. For example, it is not necessary that each line segment follows the attracting direction of some uncertainly region; as long as this property is satisfied {\em within} that uncertainty region, then in all other regions we can have several line segments in various directions, as long as each one of them is a support line of the (locally constant) cone of $\T_2$ in that region. 
Moreover, the zero-separating curve does not have to be a polygonal line; it can also contain arcs with nonzero curvature, as long as they are connected by straight line segments when crossing uncertainty regions (see Fig.~\ref{fig:2}$(d)$). 

Note also that we can assume that the zero-separating curve constructed as above has the following additional property: within each bounded exp-cone of $\T_2$, the {\em slope} of the tangent line to the curve at points outside uncertainty regions belongs to the {\em interior} of the interval bounded by the slopes of the attracting directions of the uncertainty regions. We will say that such a curve as a {\em faithful zero-separating curve}. This special type of zero-separating curve will be used in the next section.

Note that since we can prove, as above, that two-dimensional toric differential inclusions in $\RR^2_+$ admit an exhaustive set of zero-separating curves, we obtain a proof of the general three-dimensional global attractor conjecture. This was first proved in~\cite{CNP}, but the construction there is significantly more complex. On the other hand, no proof of the general four-dimensional global attractor conjecture has been found before, since for that we would need an exhaustive set of zero-separating surfaces for three-dimensional toric differential inclusions in $\RR^3_+$. This is the topic of the next section.

\subsection{Three-dimensional toric differential inclusions} 

Consider a toric differential inclusion denoted $\T_3$ in $\RR_+^3$. Then, without loss of generality we can assume that $\T_3$ is generated by a set of planes $\H_3$ such that the set of lines $\L_3$ that are orthogonal on the planes in $\H_3$ satisfies $span(\L_3) = \RR^3$.

We will also assume that for any line $l \in \L_3$, the lines that can be obtained from $l$ by linear transformations given by permutations of the coordinate axes are also contained in $\L_3$ (there are $3! = 6$ such lines). In other words, we assume that $\L_3$ is symmetric with respect to linear transformations given by permutations of the coordinate axes. 

Moreover, we will also assume that the three coordinate axes are contained in $\L_3$, and the lines of the form $\{ x=0, y=z \}, \{ y=0, z=x \}, \{ z=0, y=x \}, $ are also contained in $\L_3$. 

It is difficult to represent the toric differential inclusion $\T_3$ geometrically in $\RR^3$, since we would have to draw many planes and all their intersections along lines. To simplify our figures, we will represent $\T_3$ geometrically in $\RR^2$, as described below. 

In general, it is easier to describe the associated polar differential inclusion, $polar(\T_3)$, instead of $\T_3$ itself. Since the connection between $\T_3$ and $polar(\T_3)$ is a simple logarithmic diffeomorphism~(\ref{T.D.I}) we will often describe $\T_3$ using $polar(\T_3)$, and will refer to the domain $\RR^3$ of $polar(\T_3)$ as {\em logarithmic space}.

Since the information about $\T_3$ is contained in the set $\H_3$ of planes that bound the regions on which  $polar(\T_3)$ is constant, we describe a way to visualize these planes in $\RR^2$. 
Note also that only the planes that intersect the negative orthant are of interest for the construction of a zero-separating surface\footnote{
similarly, in the previous section, only the lines that intersected the 3rd quadrant were relevant to our construction of a zero-separating curve.
}. 
Moreover, all these planes contain the origin, so they are in one-to-one correspondence with lines in the plane $X+Y+Z = -1$ that intersect the triangle $\{ X+Y+Z=-1 \} \cap \RR_-^3$.

\begin{figure}[h!]
  \begin{center}
    \begin{tabular}{c}
      \includegraphics[width=5.4in]{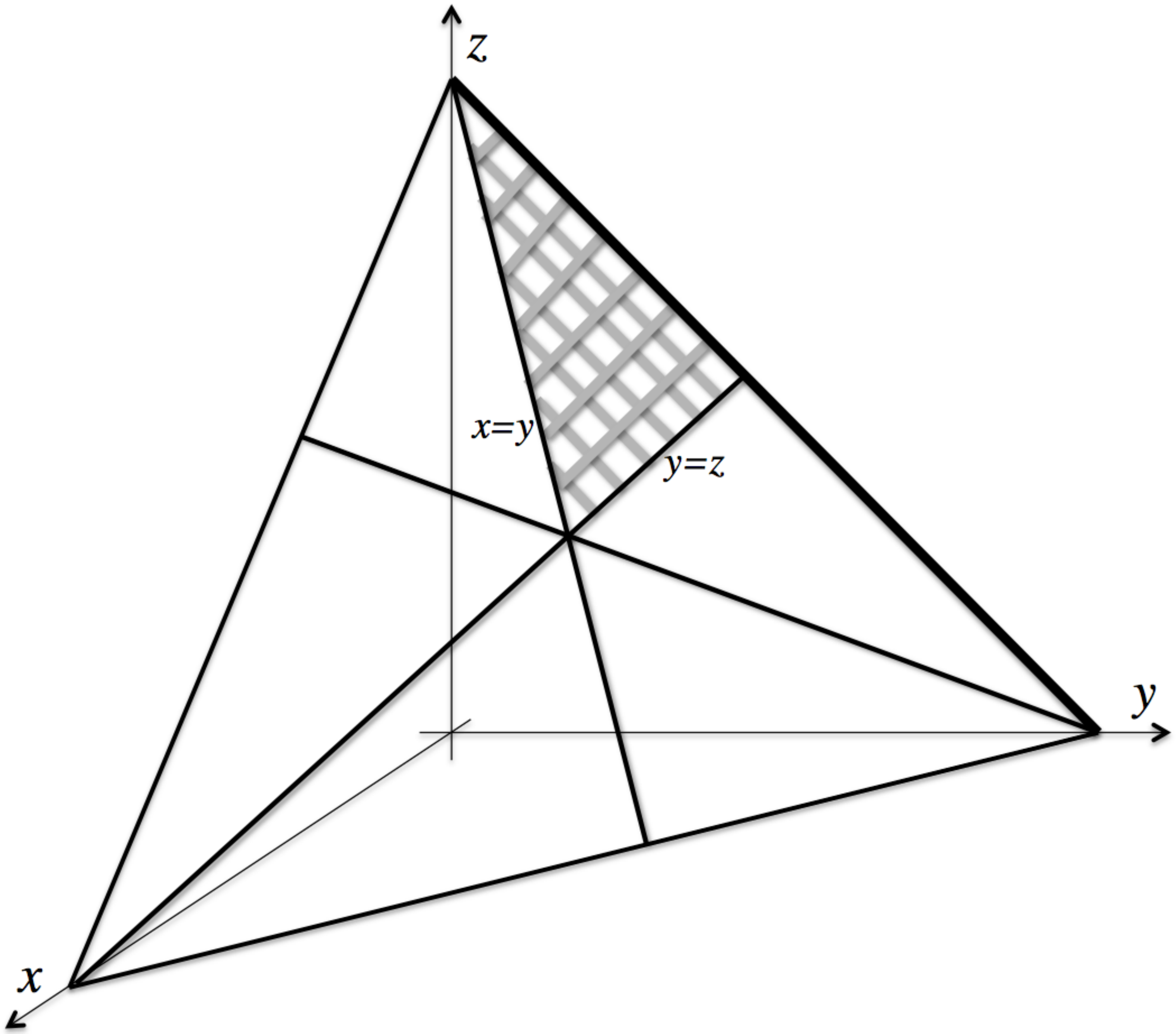}
    \end{tabular}
  \end{center}
  \caption{\label{fig:3}{\it  We partition the domain $\RR^3_+$ into six regions that are mapped into each-other by the six symmetries given by permutations of the axes. The shaded triangle points out the region $\R_3 = \{(x,y,z) \in \RR_+^3 \  | \ x \le y \le z\}$.}} 
\end{figure}

Moreover, due to the permutation symmetries mentioned above, it is sufficient if we construct a zero-separating surface only in one of the 6 regions given by specifying inequalities between $x$, $y$ and $z$, for example in the region $\{ x \le y \le z \} \cap \RR_+^3$, which is shaded in Fig.~\ref{fig:3}. If we map this region to logarithmic space we obtain the region $\{ X \le Y \le Z \} \subset \RR^3$, which, when intersected with the triangle $\{ X+Y+Z=-1 \} \cap \RR_-^3$, gives us a smaller triangle, similar to the one shaded in Fig.~\ref{fig:3} (except that it is in the negative orthant). 
 
Further, we map this smaller triangle (given by $\{ X \le Y \le Z \} \cap \{ X+Y+Z=-1 \} \cap \RR_-^3$) to the plane $\{Z=1\}$ by dividing\footnote{
we can regard the new coordinates in the plane $\{Z=1\}$ are projective coordinates.
} all the coordinates by $Z$. This transformation can be regarded geometrically as follows: we consider the line through a point $(X,Y,Z)$ and the origin; then we map this point to the intersection between this line and the plane $\{Z=1\}$. In particular, it follows that line segments inside the domain $\{ X \le Y \le Z \} \cap \{ X+Y+Z=-1 \} \cap \RR_-^3$ are mapped to line segments inside the plane $\{Z=1\}$; moreover, the image of this mapping is $\{ X \ge Y \ge Z \} \cap \{Z=1\} \subset \RR_+^3$, and can also be written as~$\{(X,Y,1) \in \RR^3 \  | \ X \ge Y \ge 1\}$. 

Note now that each plane that intersects the region $\{ X \le Y \le Z \}$ of the negative orthant can be represented uniquely by a line segment within the triangle $\{ X \le Y \le Z \} \cap \{ X+Y+Z=-1 \} \cap \RR_-^3$ (such that the endpoints of the line segment are on the edges of the triangle), which in turn can be represented uniquely by a line segment or a half-line within the ``unbounded triangle" $\{(X,Y,1) \in \RR^3 \  | \ X \ge Y \ge 1\}$. The endpoints of these segments or half-lines are on the edges of the unbounded triangle. 

Moreover, we can map the set $\{(X,Y,1) \in \RR^3 \  | \ X \ge Y \ge 1\}$ to $\RR^2$ by simply projecting on the $XY$-plane.
Then the toric differential inclusion $\T_3$ can be represented by a set of line segments and half-lines in the region $\{(X,Y) \in \RR^2 \  | \ X \ge Y \ge 1\}$, as shown in Fig.~\ref{fig:19.2}(a). 
(We do not need to consider all 6 such regions since we have assumed that $\L_3$ is symmetric with respect to linear transformations given by permutations of the coordinate axes.)

\begin{figure}[h!]
  \begin{center}
    \begin{tabular}{c}
      (a) \includegraphics[width=2.4in]{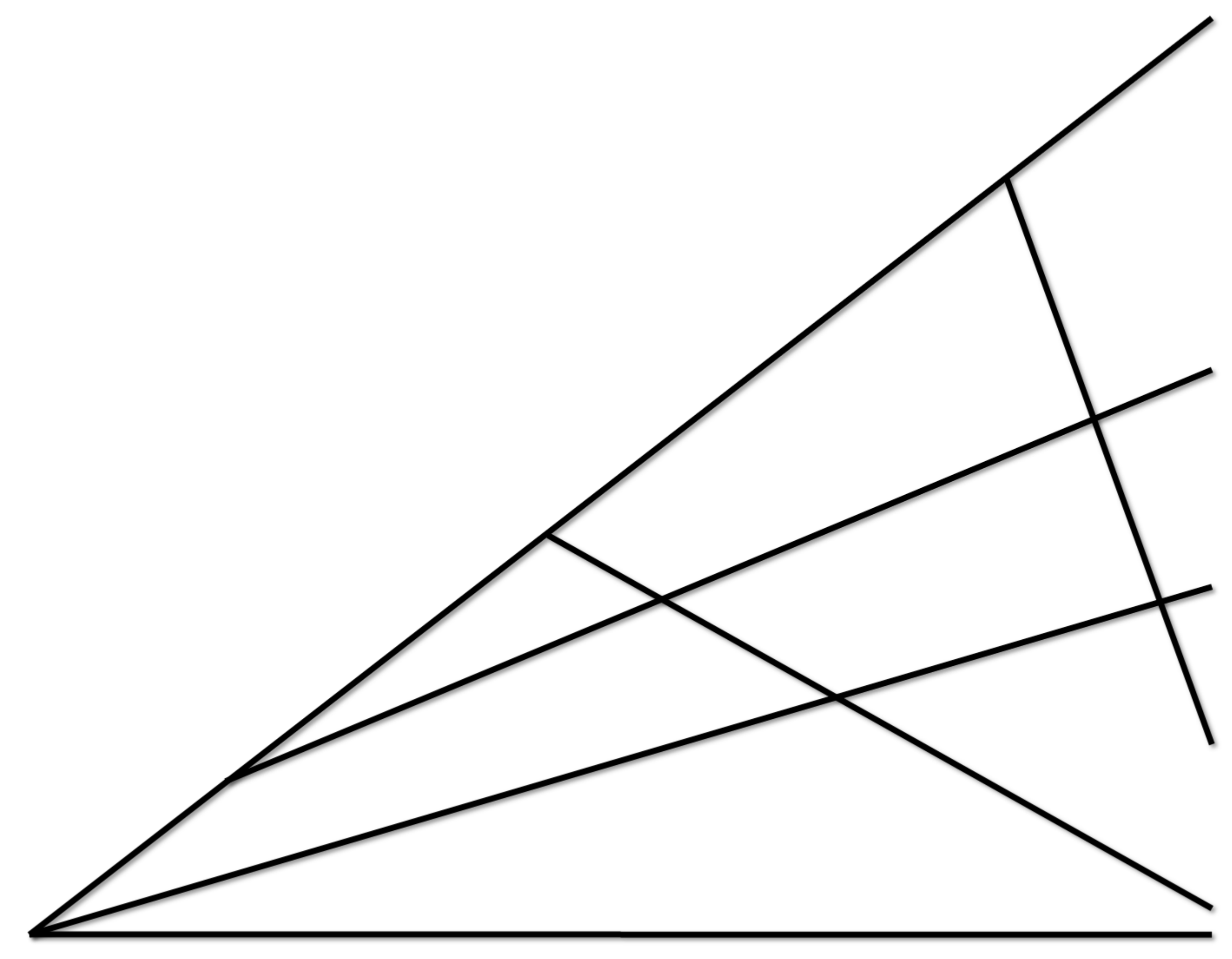}\\
      (b) \includegraphics[width=2.4in]{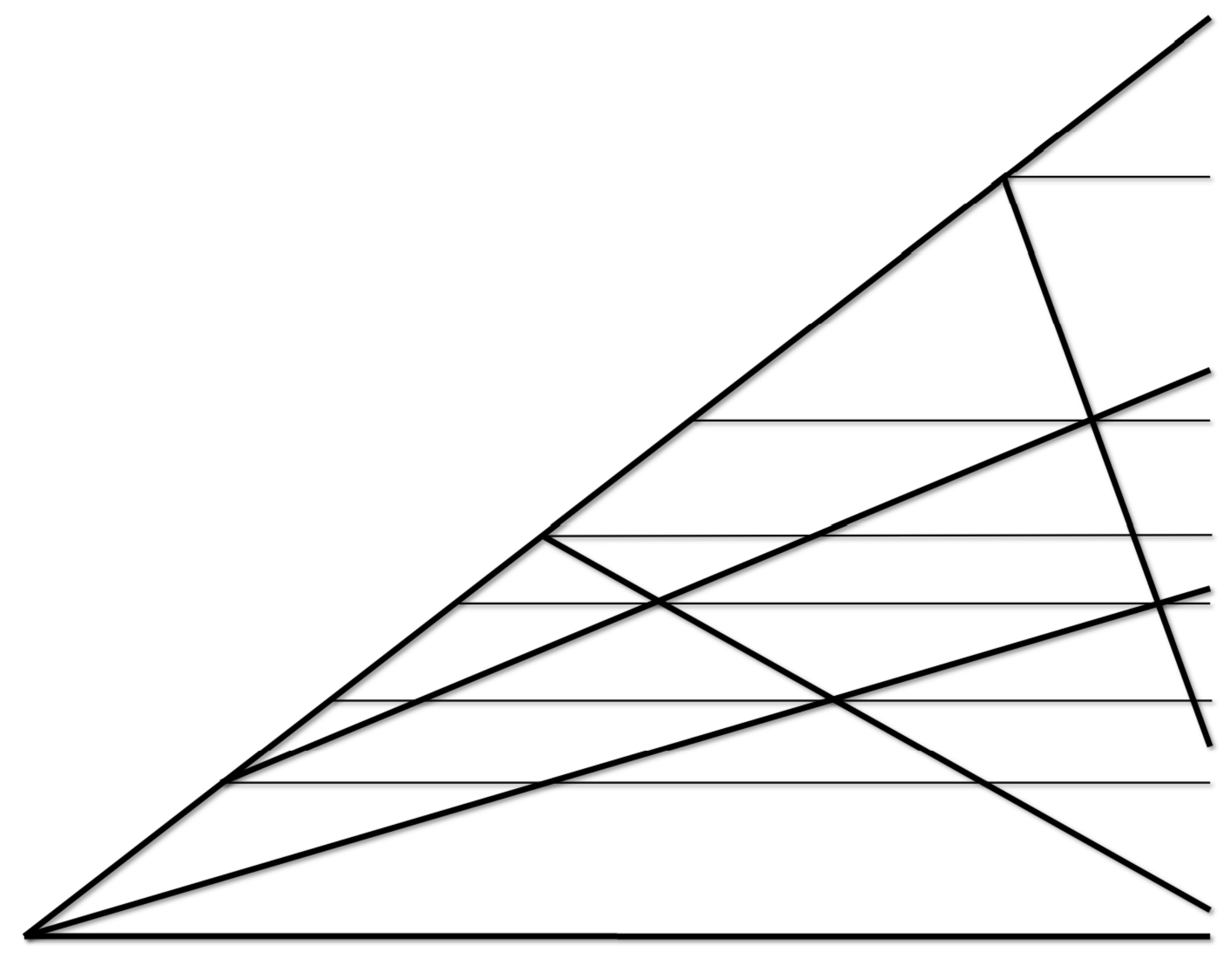}\\
      (c) \includegraphics[width=2.4in]{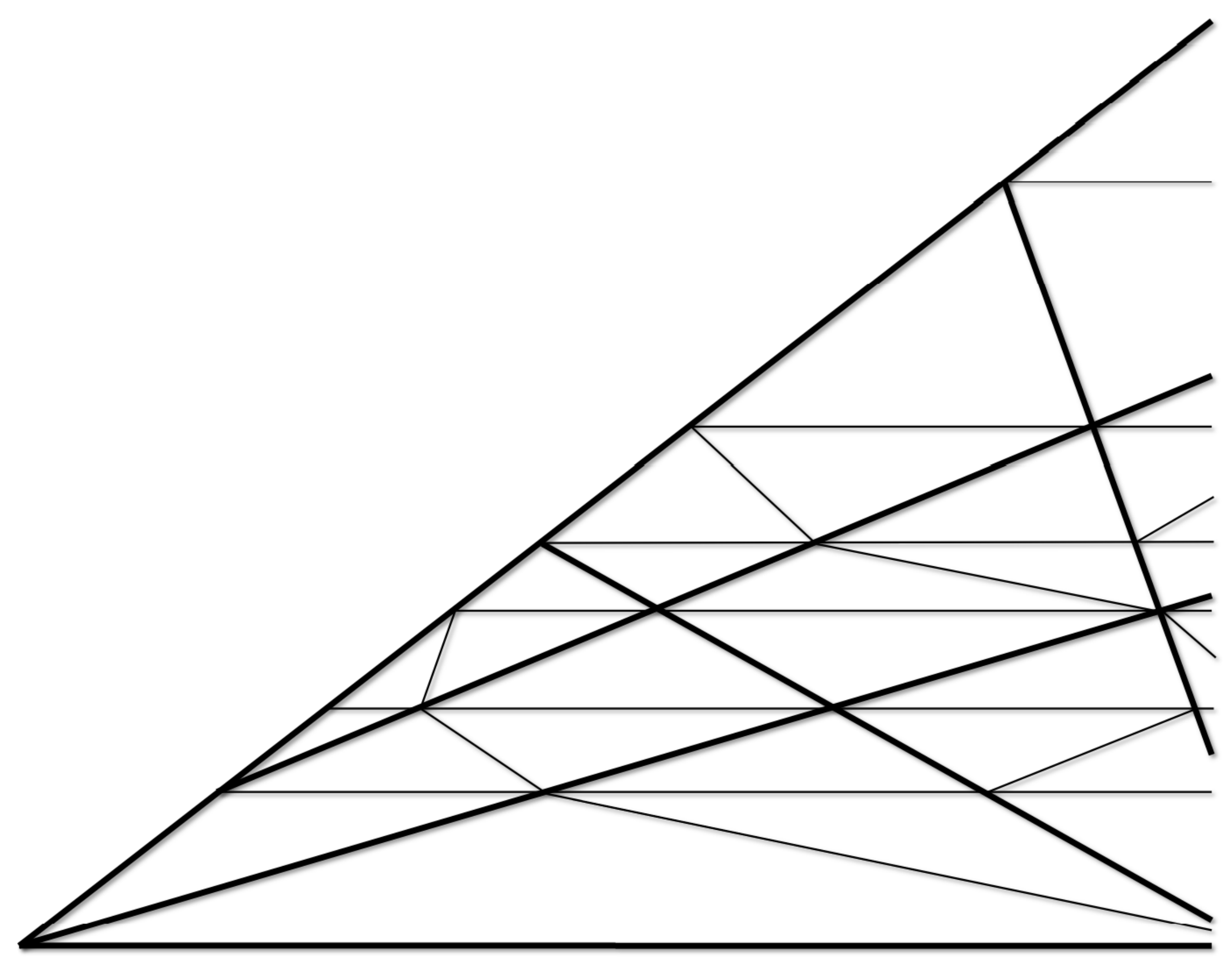}
    \end{tabular}
  \end{center}
  \caption{\label{fig:19.2}{\it  We partition the domain $\D_2$ into triangles, in several stages: (a) To begin with, the domain $\D_2$ is crossed by lines that represent the planes of the hyperplane-generated $polar(\T_3)$. Then, in (b) we draw additional horizontal lines through all the intersection points of lines in (a). Finally, in (c) we draw additional non-horizontal line segments to subdivide each polygon in (b) into triangles, without adding additional intersection points. 
  }} 
\end{figure}

Note that, without loss of generality, we can assume that $\T_3$ also contains additional horizontal half-lines through all the intersection points of line segments and half-lines in Fig.~\ref{fig:19.2}(b), and additional non-horizontal line segments connecting these intersection points, such that all the regions bounded by line segments are simplicial (i.e., triangular), as shown in Fig.~\ref{fig:19.2}(c).
In particular, this means that we now no longer regard $\T_3$ as a {\em hyperplane-generated} toric differential inclusion, but it is now a {\em general} toric differential inclusion.

We consider now the set  $\D_2 = \{(X,Y) \in \RR^2 \  | \ X \ge Y \ge 1\}$ and its simplicial partition with line segments and half-lines described above and in Fig.~\ref{fig:19.2}. Consider also the set $\R_3 = \{(x,y,z) \in \RR_+^3 \  | \ x \le y \le z\}$. We will construct a function $\phi : \D_2 \to \R_3$ whose graph (together with its symmetric images with respect to permutations of axes) will form a zero-separating surface for~$\T_3$. 

Consider a {\em faithful} zero-separating curve (obtained as described in the previous section) along the half line $\{(X,Y) \in \RR^2 \  | \ X = Y \ge 1\}$ in $\D_2$, which is the left-side boundary of $\D_2$ in Fig.~\ref{fig:4}. Consider the partition of the domain $\D_2$ shown in Fig.~\ref{fig:4}. This partition was obtained as follows.

\begin{figure}[h!]
  \begin{center}
    \begin{tabular}{c}
    \hskip-0.3cm
      \includegraphics[width=5.5in]{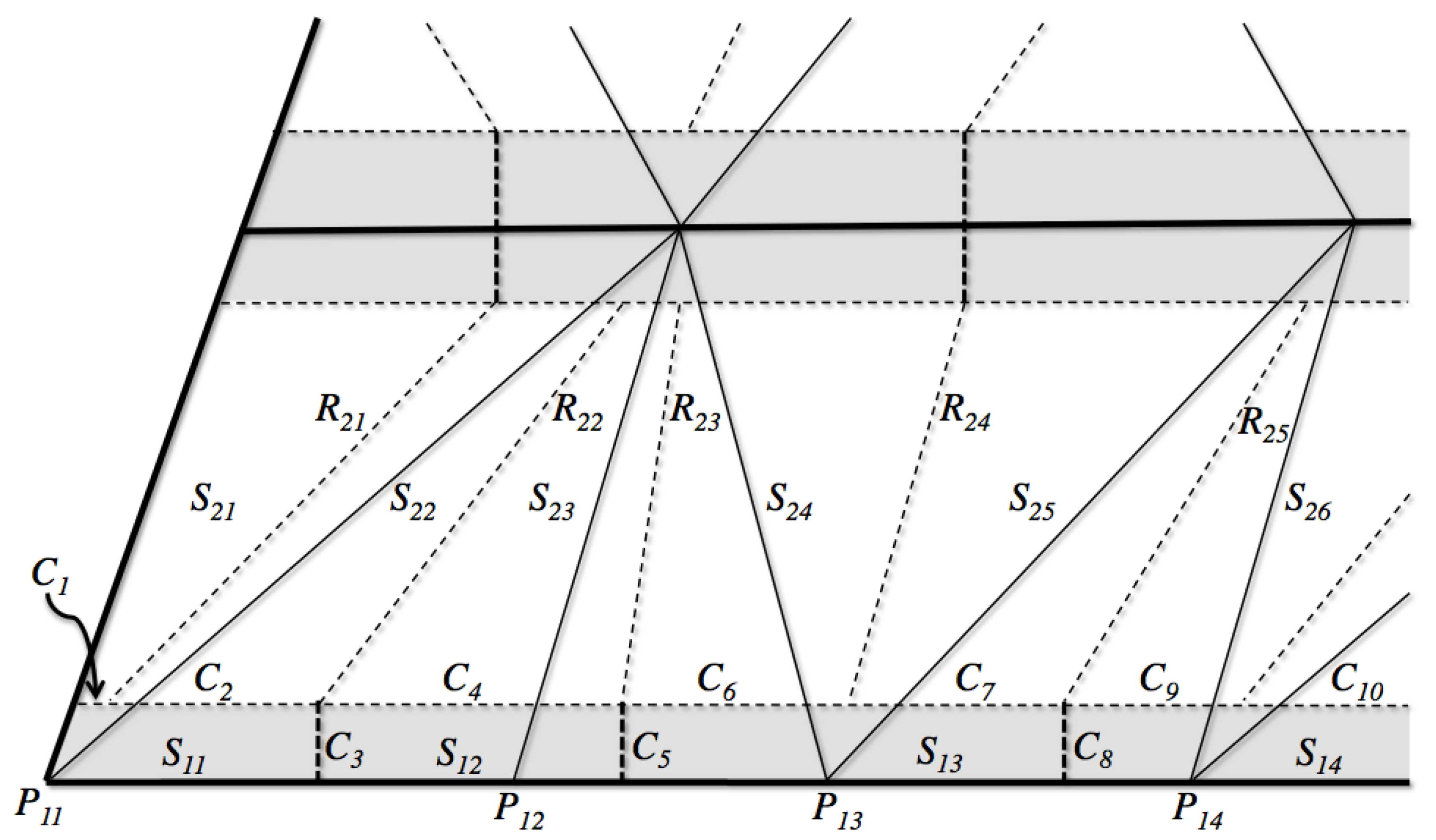}
    \end{tabular}
  \end{center}
  \caption{\label{fig:4}{\it This diagram represents a further partition of $\D_2$ into regions. Our construction of the map $\phi$ (whose graph is a zero-separating surface for $\T_3$) uses this diagram to proceed from one region to the next, as explained below. Regions are denoted $\S_{ij}$, and the curves $C_i$ and $R_{ij}$ represent boundaries between regions. Each region $\S_{1j}$ contains exactly one point where two or more solid lines intersect, denoted $P_{1j}$.}} 
\end{figure}

The shaded stripes start on the left side at the points given by the zero-separating curve mentioned above, and go approximately horizontally across, as shown. We will see later how exactly we choose the upper and lower boundaries of the shaded stripe regions. Along each shaded stripe region there are vertical line segments in between each pair of consecutive simplicial partition vertices, and these vertical line segments connect a point on the lower boundary of the shaded stripe with another point on the upper boundary of the shaded stripe, as shown in Fig.~\ref{fig:4}. Also, there is exactly one dotted line segment within each region of the simplicial partition, and it connects an upper boundary point on the lower shaded stripe with a lower boundary point on the upper shaded stripe, as shown in Fig.~\ref{fig:4}.

We now describe in more detail how this partition is generated, and how it allows us to construct the function $\phi : \D_2 \to  \R_3 = \{(x,y,z) \in \RR_+^3 \  | \ x \le y \le z\}$ mentioned above.

Note that the union of curves 
$\displaystyle \bigcup_{(X,Y) \in \D_2} \{(t^X, t^Y, t) \ | \ t > 0\}$
forms a disjoint partition of the set $\R_3$ near the origin, and any zero-separating surface close enough to the origin must contain a point on each one of the curves in the union above. 

We will think of each point $(X,Y) \in \D_2$ as associated to the curve $\{(t^X, t^Y,  t) \ | \ t~>~0\}$, and we will define the function $\phi$ such that $\phi(X,Y)$ lies on this curve. In other words, consider the function $\psi : \R_3 \cap B_\eps \to \D_2$ such that for each point $(x,y,z) \in \R_3$ at distance at most $\eps$ to the origin we have that $\psi(x,y,z) = (X,Y) \in \D_2$ such that there exists some $t>0$ with $(x,y,z) = (t^X,t^Y,t)$. (Note that actually $t=z$.) Then, we will define the function $\phi$ such that $\psi\circ\phi : \D_2 \to \D_2$ is the identity function.

This means that if we specify a point $(x_0,y_0,z_0) \in \R_3$ which is on the graph of the function $\phi$, then we have also implicitly specified exactly for which point $(X_0,Y_0) \in \D_2$ we have $\phi(X_0,Y_0) = (x_0,y_0,z_0)$. So, to define the function $\phi$, it is enough to specify the surface $\S_\phi$ in $\R_3$ which is the graph of $\phi$.

As we mentioned above, we start by defining the graph of $\phi$ along the left-side boundary of $\D_2$, i.e., along the half-line $\{(X,Y) \in \RR^2 \  | \ X = Y \ge 1\}$, and we do so by using a faithful two-dimensional zero-separating surface (i.e., curve) along this half-line, constructed as described in the previous section. This gives us a left-side boundary for the surface $\S_\phi$ (if we think of the surface $\S_\phi$ as lying in Fig.~\ref{fig:3}, where the plane $\{x=y\}$ is to the left of the region $\R_3$).

In particular, we choose the flat regions of the left-side boundary of $\S_\phi$ to be short enough, so the shaded regions are thin enough to allow the vertical lines shown in each shaded region to cross that shaded region without intersecting other lines (we may have to move the construction closer to the origin to make this possible)\footnote{We plan to construct a zero-separating surface based on the ``blueprint" shown in Fig.~\ref{fig:4}. The blueprint comes with a specified width for the shaded horizontal stripes, to accommodate other features, as we will see later. We don't need to change this blueprint to accommodate a larger value of $\delta$; instead, we can compensate for the larger $\delta$ just by moving the whole construction of the zero-separating surface closer to the origin.}. 

We now explain how we define $\phi$ on the various patches of $\D_2$ shown in Fig.~\ref{fig:4}, and, equivalently, we construct patches of the surface $\S_\phi$ in $\R_3$ such that when we map each such patch to $\D_2$ via $\psi$, we obtain one of the patches of $\D_2$ shown in Fig.~\ref{fig:4}.

The left-bottom patch of $\S_\phi$ is flat, given by a plane whose normal vector is $(P_{11},1) = (1,1,1)$, where the point $P_{11}$ is the corner of the domain $\D_2$. Note that this flat patch $\S_{11}$ is compatible with the zero-separating curve on the left-side boundary of $\S_\phi$. 

The boundary of this patch is made up of line segments, and we describe them as follows. Note how in Fig.~\ref{fig:4} the boundary of the image of this patch in $\D_2$ consists of three curves, denoted $C_1, C_2, C_3$. Each one of these curves intersects exactly one solid line segment, and recall that the solid line segments in Fig.~\ref{fig:4} represent the {\em separating surfaces}\footnote{the {\em separating surfaces} of $\T_3$ are exponential images of the hyperplanes that generate $polar(\T_3)$.} of $\T_3$. To simply the terminology we will say that to each black line segment in Fig.~\ref{fig:4} there corresponds an {\em attracting direction}\footnote{
As we discussed before, the {\em attracting directions} are easier to describe for  $polar(\T_3)$; but recall that if we take into account the correspondence between their separating surfaces, then $\T_3$ and $polar(\T_3)$ have the same attracting directions.
} 
of $\T_3$.

\begin{figure}[h!]
  \begin{center}
    \begin{tabular}{c}
      \includegraphics[width=4.8in]{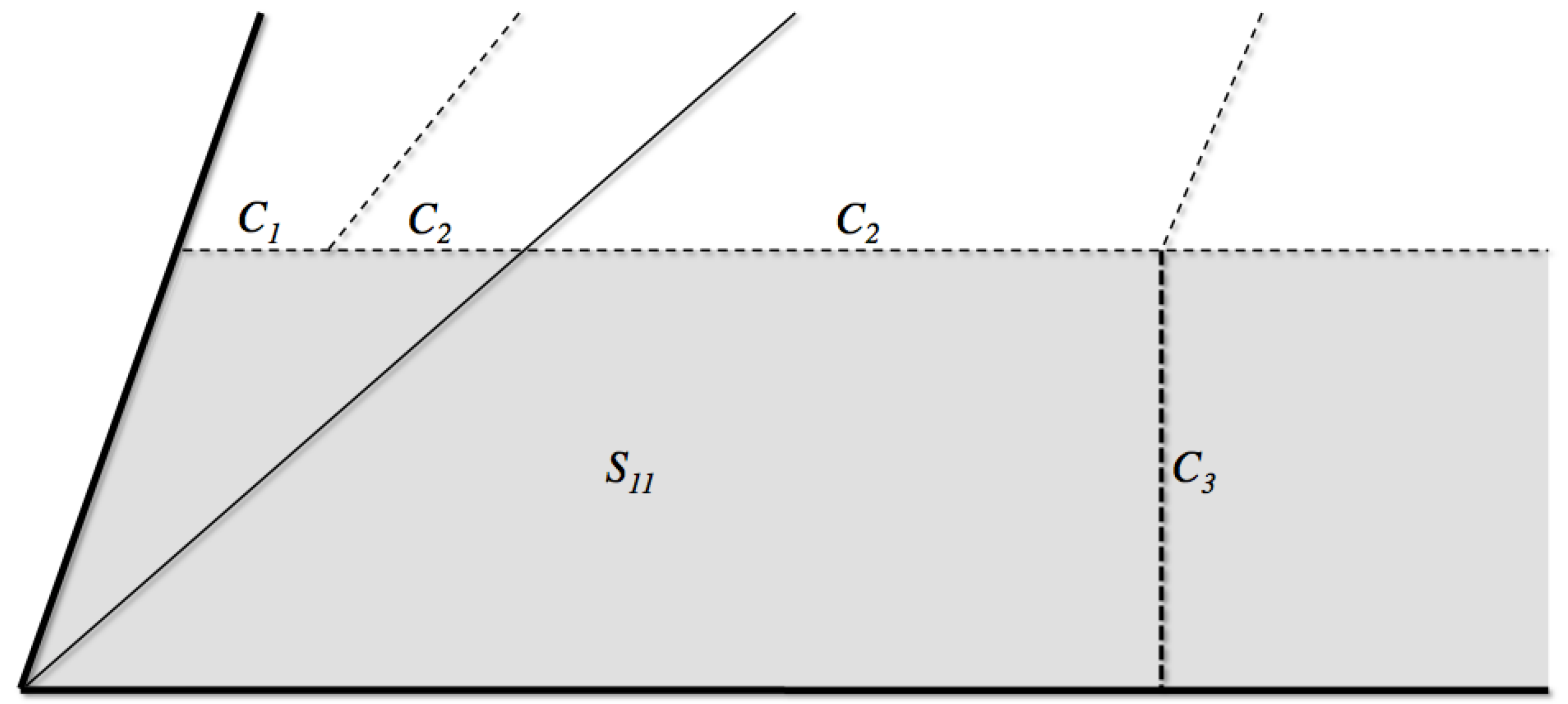}
    \end{tabular}
  \end{center}
  \caption{\label{fig:5}{\it Zoom in on the left-side corner of the diagram in Fig.~\ref{fig:4}.}} 
\end{figure}

Then, we choose $C_1$ such that the boundary $\phi(C_1)$ of the flat patch $\S_{11}$ is a line segment in the attracting direction of the black line segment that $C_1$ intersects, and we choose $C_2$ and $C_3$ similarly. Since each one of $\phi(C_1), \phi(C_2), \phi(C_3)$ is orthogonal to a plane that contains the line corresponding to the point $(P_{11},1)$, it follows that they do indeed belong to a common plane, which is orthogonal to the vector $(P_{11},1)$.

Note that, since $\phi(C_3)$ is a line segment in the attracting direction of a {\em horizontal} segment in Fig.~\ref{fig:4}, it follows that the $x$-coordinates of points along $\phi(C_3)$ must be constant; therefore the $X$-coordinates of points along $C_3$ must also be constant, because $C_3 = \psi(\phi(C_3))$. Therefore, $C_3$ is flat and orthogonal on the horizontal black line segment it intersects.

On the other hand, $\phi(C_2)$ is a line segment in the attracting direction of a {\em non-horizontal} segment in Fig.~\ref{fig:4}, so the $x$-coordinates along $\phi(C_2)$ are {\em not} constant. This means that $C_2$ is {\em not} perfectly horizontal, although it may appear so in Fig.~\ref{fig:4} and Fig.~\ref{fig:5}. But, when we map $\phi(C_2)$ through $\psi$ to obtain $C_2$, the image must be a curve with {\em almost horizontal} tangent line. We can see this by checking that, if we fix some values $X>Y>1$ and $0 < t \ll 1$ and we calculate the Jacobian of $\psi$ at the point $(t^X,t^Y,t)$, we obtain the matrix
\begin{equation} \label{JacPsi}
\frac{\log t}{t}
\left(
\begin{array}{ccc}
t^{1-X} &    0     &  -X \\
   0    & t^{1-Y}  &  -Y 
\end{array}
\right). 
\end{equation}
Note now that, given the assumptions above we have $t^{1-X} \gg X$ and $t^{1-Y} \gg Y$, and also $t^{1-X} \gg t^{1-Y}$. This explains why $C_2$ is approximately horizontal everywhere. The case of $C_1$ is similar, except that $C_1$ reaches the line $\{X=Y\}$ in $\D_2$, so $C_1$ is approximately horizontal far enough from that line, while close enough to the line $\{X=Y\}$ the curve $C_1$ becomes approximately orthogonal to it.

Let us look now at the second shaded patch $\S_{12}$ on the bottom shaded stripe, such that $\psi(\S_{12})$ is bounded by the curves $C_3, C_4$ and $C_5$ in Fig.~\ref{fig:4}. This will also be a flat patch, given by a plane whose normal vector is $(P_{12},1)$, where $P_{12}$ is the intersection point of solid lines within the patch $\S_{12}$. 

Note first that the patch $\S_{12}$ is compatible with the patch $\S_{11}$ we constructed before, because they meet along the line segment $\phi(C_3)$, and, since $\phi(C_3)$ is orthogonal to the plane in $\RR^3$ determined by the horizontal black line it intersects, it follows that it is also orthogonal to any line within this plane, and in particular it is orthogonal to the vectors $(P_{11},1)$ and $(P_{12},1)$. Therefore the patch $\S_{12}$ is compatible with the patch $\S_{11}$, i.e., they intersect along the line segment $\phi(C_3)$ whose direction is indeed the intersection of the two planes that give $\S_{11}$ and $\S_{12}$.  

The construction of $C_4$ is similar to the construction of $C_2$, and the construction of $C_5$ is similar to the construction of $C_3$.

After this, the construction of the other patches $\S_{1j}$ in the on the bottom shaded stripe of Fig.~\ref{fig:4} proceeds similarly. We will now discuss the construction of the patches along the bottom white stripe of Fig.~\ref{fig:4}. Let us start by looking at the patch denoted $\S_{21}$. 

As before, we start with a zero-separating curve along the left-side boundary of this patch. Starting from each point of this curve, we draw a line segment in the attracting direction of the left-side boundary line, and continue it to the right until it reaches the surface represented by the dotted line segment $R_{21}$ in Fig.~\ref{fig:4}. We obtain a curve along the surface represented by $R_{21}$, and then, starting from each point of this curve, we draw a line segment in the attracting direction of the next black boundary line, and continue it to the right until it reaches the surface represented by the dotted line segment $R_{22}$ in Fig.~\ref{fig:4}. This will give us a curve along the surface represented by $R_{22}$, and we continue the construction in the same way until we fill in this bottom white stripe of Fig.~\ref{fig:4} completely.

Next, we construct the zero-separating surface $\phi(\D_2)$ along the second shaded stripe of Fig.~\ref{fig:4}, then the second white stripe, and so on. Along each stripe the construction starts on the left-hand side of the domain $\D_2$ and, one by one, fills in each region in the partition shown in Fig.~\ref{fig:4}.

We need now to check that $\phi(\D_2)$ (together with its 6 symmetric images) is indeed a zero-separating surface for $\T_3$. For this it is enough to see that, within each region in the partition shown in Fig.~\ref{fig:4}, the surface $\phi(\D_2)$ is a support surface of the cone of $\T_3$ in that region.

First we check that $\phi(\D_2)$ satisfies the conditions necessary for being a zero-separating surface for $\T_3$ along the shaded region patches. Indeed, all the surface patches of $\phi(\D_2)$ in the shaded regions of $\D_2$ are flat plane patches, and for each shaded patch that plane is exactly the cone of $\T_3$ at the black point that lies at the center of that patch, so it follows that $\phi(\D_2)$ is a support surface of the cone of $\T_3$ in all shaded regions in Fig.~\ref{fig:4}. 

Consider now the first white patch we constructed, denoted $\S_{21}$. We will argue that the tangent plane at any point of this patch is a support plane of the cone of $\T_3$ in that leftmost lower triangle of the partition in Fig.~\ref{fig:4}. Recall that the left half of $\S_{21}$ (up to the surface represented by $R_{21}$) was constructed as a {\em ruled surface} that started from the left-side boundary curve $C_0$, which was part of a faithful zero-separating curve along the line $\{ X = Y \}$. The direction of the straight line segments along this ruled surface was the attracting direction of the surface represented by $R_{21}$. Then the tangent plane to this ruled surface must contain this constant attracting direction. 

If we imagine the construction above for the case where the left-side boundary curve $C_0$ is replaced by a short line segment $l_0$ within the plane $\{ x = y \}$, then the ruled surface is a plane patch $\pi_0$. Then note that the normal to $\pi_0$ is the same as the normal to $l_0$ within the plane $\{ x = y \}$, because $\pi_0$ is orthogonal to $\{ x = y \}$\footnote{In general, if we have a codimension-two linear subspace $l$ contained in a hyperplane $\H$, and we build a hyperplane $\pi$ by extending from $l$ in a direction orthogonal to $\H$, then the normal line to $l$ within $\H$ is the same as the normal line of the hyperplane $\pi$. (This will be useful for higher dimensional constructions similar to the one above, with $l$ inside the hyperplane $\H = \{ x=y \}$ in $\RR^n$.)}. Therefore, by approximating $C_0$ with polygonal lines and then taking the limit, we obtain that the normal to $\S_{21}$ at a point $\phi(P)$ is approximately the same as the normal to $C_0$ within the plane $\{ x = y \}$, calculated at a point $P_0$ which is obtained by drawing a horizontal line from $P$ and intersecting it with the line $\{ X = Y \}$ in Fig.~\ref{fig:4} (here we are using the fact that the image via $\psi$ of a line segment that passes through $\phi(P)$ and is orthogonal on $\{ x = y \}$ is approximately the horizontal line from $P$ to the line $\{ X = Y \}$). Then we obtain that the surface patch $\S_{21}$ does satisfy the condition that  its outer normal vector belongs to the cone corresponding to the region $\psi(\S_{21})$ in $\D_2$, so $\S_{21}$ is a {\em zero-separating surface patch}\footnote
{
i.e., it is a surface patch that has the local properties of a zero-separating surface for $\T_3$.
} for $\T_3$.

Consider now the second white patch we constructed, denoted $\S_{22}$ in Fig.~\ref{fig:4}. The surface represented by $R_{21}$ is asymptotically close to the plane $\{ x = 0 \}$ in $\RR^3$ (i.e., if the neighborhood of the origin in $V_0 \subset \RR^3$ is small enough then this surface is arbitrarily close even in relative terms).\footnote
{
Consider a surface $z = x^\alpha y^\beta$ that contains the curve $\{(t^{\gamma_1}, t^{\gamma_2}, t) | t>0 \}$ for some $\gamma_1 > \gamma_2 > 1$. The normal to the surface is given by $grad(x^\alpha y^\beta - z) = (\alpha x^{\alpha-1} y^\beta, \beta x^\alpha y^{\beta-1}, -1)$, so the normal at a point $(t_0^{\gamma_1}, t_0^{\gamma_2}, t_0)$ is given by the vector $(\alpha t_0^{(\alpha-1)\gamma_1 + \beta\gamma_2}, \beta t_0^{\alpha\gamma_1 + (\beta-1)\gamma_2}, -1) = (\alpha t_0^{1 - \gamma_1}, \beta t_0^{1 - \gamma_2}, -1)$, because $\alpha\gamma_1 + \beta\gamma_2 = 1$. 
If $t_0$ is small enough then $t_0^{1 - \gamma_1} \gg t_0^{1 - \gamma_2} \gg 1$. Therefore, if $\alpha \neq 0$, the normal vector $(\alpha t_0^{1 - \gamma_1}, \beta t_0^{1 - \gamma_2}, -1)$ is approximately along the $x$-axis.\\
This calculation does not include $z$-independent surfaces of this kind, i.e., surfaces of the form $1 = x^\alpha y^\beta$ such that $\alpha\gamma_1 + \beta\gamma_2 = 0$ for some $\gamma_1 > \gamma_2 > 1$. Note that the normal vector $(\alpha t_0^{1 - \gamma_1}, \beta t_0^{1 - \gamma_2}, -1)$ is simply replaced by $(\alpha t_0^{1 - \gamma_1}, \beta t_0^{1 - \gamma_2}, 0)$ in this case. Alternatively, we can analyze all cases at once by looking at the surface $1 = x^\alpha y^\beta z^\gamma$ instead of the surface $z = x^\alpha y^\beta$; then the normal at the point $(t_0^{\delta_1}, t_0^{\delta_2}, t_0)$ becomes $(\alpha t_0^{-\delta_1}, \beta t_0^{-\delta_2}, \gamma t_0^{-1})$, and we still have $t_0^{-\delta_1} \gg t_0^{-\delta_2} \gg t_0^{-1}$.
}

Then, consider a polygonal line approximation of the right-side boundary curve   of $\S_{21}$, such that each line segment in this polygonal line is approximately parallel to the plane $\{ x=0 \}$. Denote one such polygonal line segment by $l_1$. If we construct a plane starting from $l_1$ and going in the attracting direction of the surface represented by the solid line in region $S_{22}$ in Fig.~\ref{fig:4}, then the normal to this plane will have to lie on the plane represented by the solid line and also will have to be contained in the dihedral angle represented by the white stripe (because it has to be orthogonal both to the attracting direction of the surface represented by the line $C_8$, and to the line segment by $l_1$ we started with, and because the original zero-separating {\em curve} that gave us the left-side initial condition was {\em faithful}). Therefore the second white patch we construct (denoted by $\S_{22}$ in Fig.~\ref{fig:4}) is also a zero-separating surface patch for $\T_2$.

We can now continue towards the right in the lowest white stripe in Fig.~\ref{fig:4}, and conclude that we have constructed a zero-separating surface patch for $\T_2$ along this whole white stripe in $\D_2$. Then we do the same for the next higher shaded stripe, and then for the next higher white stripe, and so on. In conclusion, it follows that $\phi(\D_2)$ is a zero-separating surface for $\T_2$.

\bigskip

Note that the zero-separating surface $\phi(\D_2)$ we have constructed above is {\em not faithful}. We can obtain a faithful zero-separating surface $\tilde\phi(\D_2)$ by using a similar construction, but based on a partition of $\D_2$ like the one shown in Fig.~\ref{fig:19.5}.

\begin{figure}[h!]
  \begin{center}
    \begin{tabular}{c}
      \hskip-0.3cm
      \includegraphics[width=5.5in]{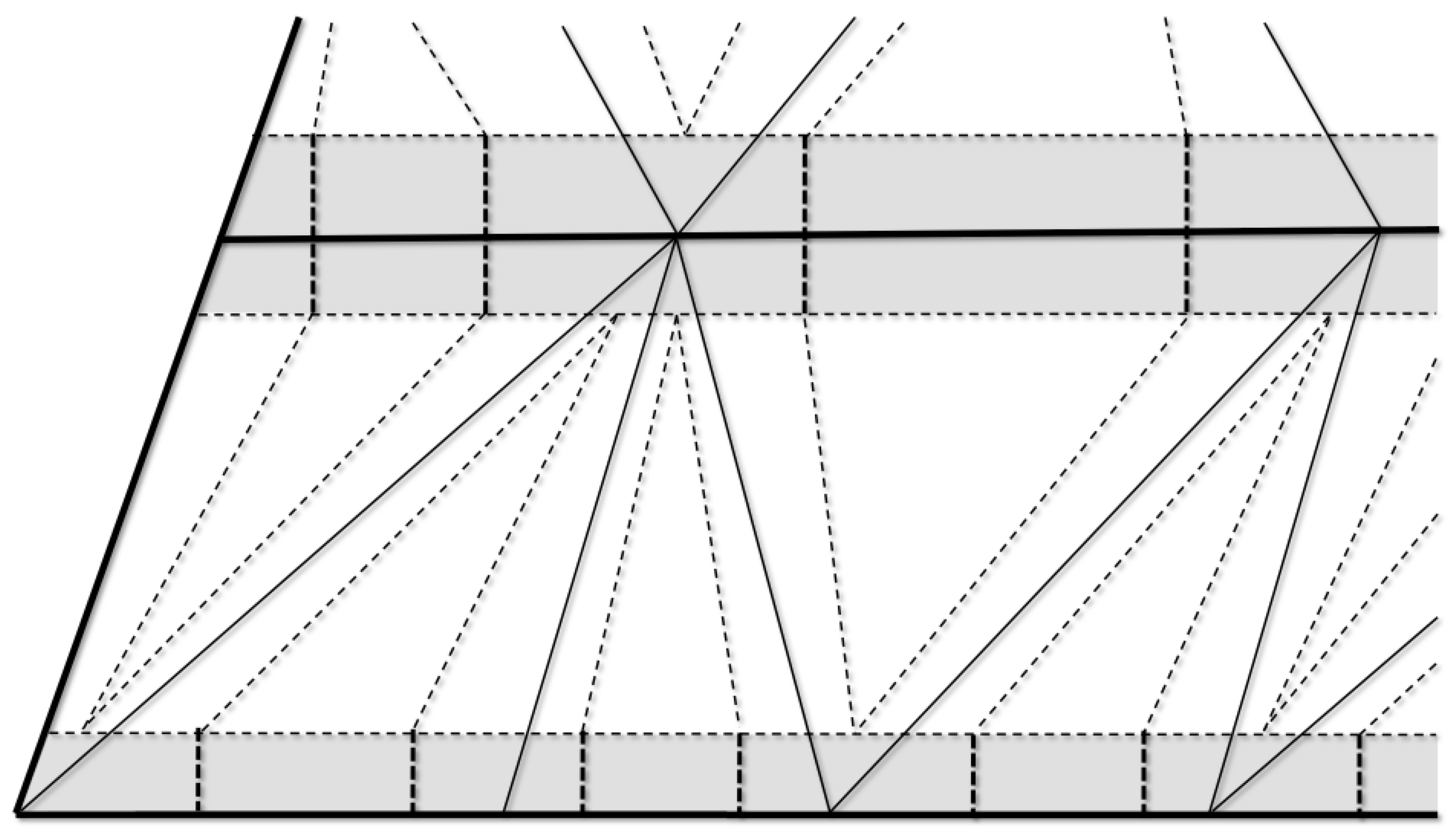}
    \end{tabular}
  \end{center}
  \caption{\label{fig:19.5}{\it A different partition of $\D_2$, that can be used to build a {\em faithful} zero-separating surface for $\T_3$.}} 
\end{figure}

Note that each triangular patch in Fig.~\ref{fig:19.5} is now divided into three sub-patches (and not just two as before). Of these three sub-patches, we define $\tilde\phi$ along the left-side and right-side patches as before, but for the middle sub-patch we proceed differently, to ensure that we obtain a faithful zero-separating surface. \\
For each middle sub-patch within a shaded stripe we define $\tilde\phi$ to be not linear along the horizontal direction (as before) but to vary along this direction in such a way that the normal to the graph of $\tilde\phi$ at a point $\tilde\phi(P)$ is very close to the projection of $P$ on the black horizontal line along the patch. (This property can be ensured by first defining $\tilde\phi$ along the black line, and then extending the graph of $\tilde\phi$ to the middle sub-patch via line segments that are orthogonal to the black line.)\\
Also, for each middle sub-patch within a white stripe we define $\tilde\phi$ such that the normal to the graph of $\tilde\phi$ at a point $\tilde\phi(P)$ is very close to $P$ itself. (One way to accomplish this is formulate this condition as a first order PDE with left-side boundary condition given by the already constructed graph of $\tilde\phi$ on the left side of the middle sub-patch.)

\bigskip

\subsection{Four-dimensional toric differential inclusions} 

Consider a toric differential inclusion denoted $\T_4$ in $\RR_+^4$. Without loss of generality (analogous with the previous section) we can assume that $\T_4$ is generated by a set of hyperplanes $\H_4$ with a set of orthogonal  lines $\L_4$ such that $span(\L_4) = \RR^4$.

Also analogous with the previous section, we will assume that for any line $l \in \L_4$, the lines that can be obtained from $l$ by linear transformations given by permutations of the coordinate axes are also contained in $\L_4$ (there are $4! = 24$ such lines). In other words, we assume that $\L_4$ is symmetric with respect to linear transformations given by permutations of the coordinate axes. 

Moreover, we will also assume that the three coordinate axes are contained in $\L_4$, and the lines of the form $\{ x=y, z=0, w=0 \}, \{ x=z,y=0, w=0 \}, \{ y=z,x=0, w=0 \}, \{ x=w,y=0, z=0 \}, \{ y=w,x=0, z=0 \}, \{ z=w,x=0, y=0 \}$ are also contained in $\L_4$. 

It is difficult to represent the toric differential inclusion $\T_4$ geometrically in $\RR^4$ in a useful way. To be able to draw some useful figures, we will represent key parts of $\T_4$ geometrically in $\RR^3$, as described below. 

All the information about $\T_4$ is contained in the set of hyperplanes that bound the cones of its associated polar differential inclusion, $polar(\T_4)$. 
Only the hyperplanes that intersect the negative orthant are of interest for the construction of a zero-separating surface. 
Moreover, all these hyperplanes contain the origin, so they are in one-to-one correspondence with two-dimensional planes in the hyperplane $X+Y+Z+W = -1$ that intersect the tetrahedron $\{ X+Y+Z+W = -1 \} \cap \RR_-^4$.

\begin{figure}[h!]
  \begin{center}
    \begin{tabular}{c}
      \hskip-0.3cm
      \includegraphics[width=5.5in]{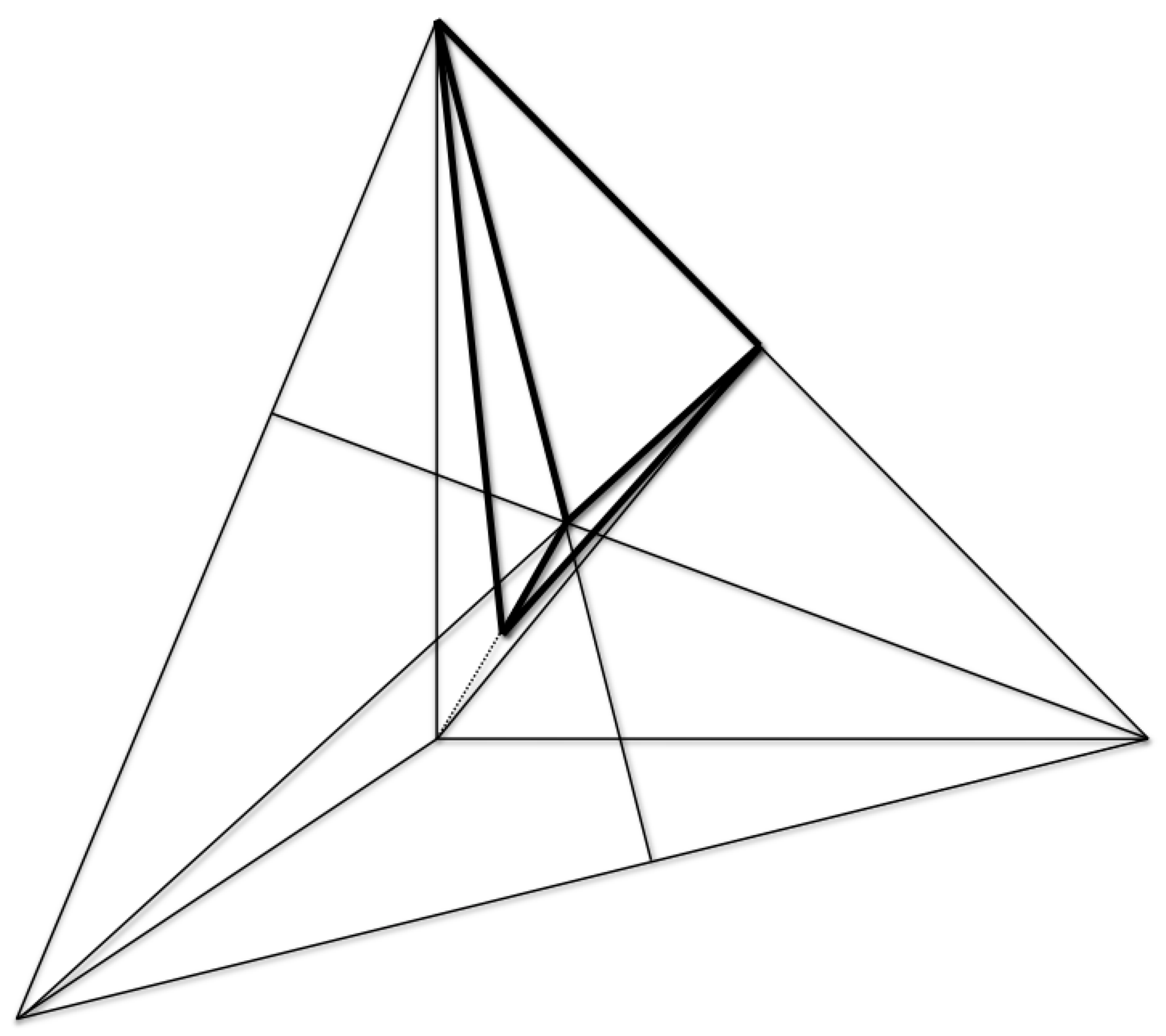}
    \end{tabular}
  \end{center}
  \caption{\label{fig:6.2}{\it Like in the previous section, we use the symmetries of $\T_4$ to divide the problem into 4!=24 smaller problems. The domain $\D_3$ is obtained by mapping into $\RR^3$ one of the 24 tetrahedra, such as the one with thick edges shown above.}} 
\end{figure}
\begin{figure}[h!]
  \begin{center}
    \begin{tabular}{c}
      \hskip-0.3cm
      \includegraphics[width=4.5in]{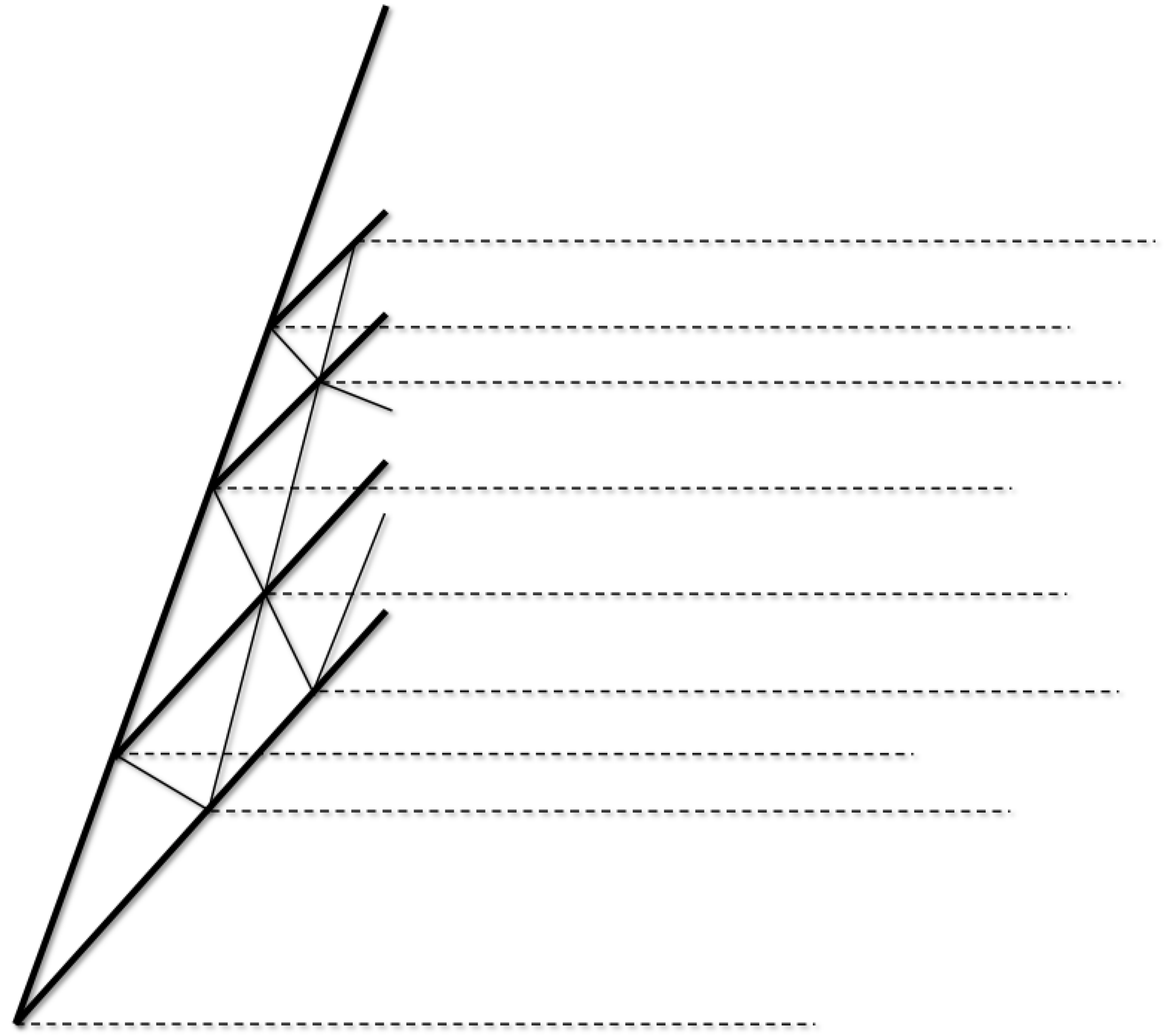}
    \end{tabular}
  \end{center}
  \caption{\label{fig:tunnels_4d}{\it The construction of the function $\phi$ in the previous section has proceeded one step at a time, first the bottom shaded layer of $\D_2$, then the bottom white layer, and so on. The analogous construction in this section features 3d ``tunnels" instead of 2d layers, as shown above. In the figure above we should think of the configuration of solid lines as being all in the same plane, on the left-side facet of $\D_3$, and representing the domain of a left-side boundary condition for $\phi$. Then, each solid line segment of the left-side facet of $\D_3$ determines a (flat) surface patch inside $\D_3$, obtained by drawing lines in the $X$-direction that start on that line segment. The dotted lines shown above are lines of this kind that happen to pass through intersection points of two or more segments on the left-side facet of $\D_3$. These dotted lines and the flat surface patches that connect them determine the boundaries of the 3d tunnels we mentioned above. There is additional subdivision within these patches, as explained in Fig.~\ref{fig:19.5}.}} 
\end{figure}

Moreover, due to the permutation symmetries mentioned above, it is sufficient if we construct a zero-separating surface only in one of the 24 regions given by specifying inequalities between $x$, $y$, $z$ and $w$, for example in the region $\{ x \le y \le z \le w\} \cap \RR_+^4$. If we map this region to logarithmic space we obtain the region $\{ X \le Y \le Z \le W \} \subset \RR^4$, which, when intersected with the tetrahedron $\{ X+Y+Z+W = -1 \} \cap \RR_-^4$, gives us a smaller tetrahedron, similar to the situation shown in Fig.~\ref{fig:6.2}. 
 
Further, we map this smaller tetrahedron (given by $\{ X \le Y \le Z \le W \} \cap \{ X+Y+Z+W=-1 \} \cap \RR_-^4$) to the plane $\{W=1\}$ by dividing all the coordinates by $W$. This transformation can be regarded geometrically as follows: we consider the line through a point $(X,Y,Z,W)$ and the origin; then we map this point to intersection between this line and the hyperplane $\{W=1\}$. In particular, it follows that line segments inside the domain $\{ X \le Y \le Z \le W \} \cap \{ X+Y+Z+W=-1 \} \cap \RR_-^4$ are mapped to line segments inside the plane $\{W=1\}$; moreover, the image of this mapping is $\{ X \ge Y \ge Z \ge W \} \cap \{W=1\} \subset \RR_+^4$, and can also be written $\{(X,Y,Z,1) \in \RR^4 \  | \ X \ge Y \ge W \ge 1\}$. 

Therefore each hyperplane that intersects the region $\{ X \le Y \le Z \le W \}$ of the negative orthant can be represented uniquely by the intersection between a two-dimensional plane and the tetrahedron $\{ X \le Y \le Z \le W \} \cap \{ X+Y+Z+W=-1 \} \cap \RR_-^4$, which in turn can be represented uniquely by the intersection between a two-dimensional plane and the ``unbounded tetrahedron" $\{(X,Y,Z,1) \in \RR^4 \  | \ X \ge Y \ge Z \ge 1\}$. Moreover, we can map the set $\{(X,Y,Z,1) \in \RR^4 \  | \ X \ge Y \ge Z \ge 1\}$ to $\RR^3$ by simply projecting on the $XYZ$-space. 
The result is shown in Fig.~\ref{fig:6.2} and Fig.~\ref{fig:tunnels_4d}.

Then the toric differential inclusion $\T_4$ can be represented by a such a set of planes in the region $\{(X,Y,Z) \in \RR^3 \  | \ X \ge Y \ge Z \ge 1\}$.  
(We don't need to consider all 24 such regions since we have assumed that $\L_4$ is symmetric with respect to linear transformations given by permutations of the coordinate axes.)

Note that, without loss of generality, we can assume that $\T_4$ also contains additional horizontal ``unbounded triangles" $\{(X,Y,Z) \in \RR^3 \  | \ X \ge Y \ge Z \ge 1, Z = \text{constant} \}$ through all the intersection points of planes in Fig.~\ref{fig:6.2}, and additional non-horizontal triangles connecting these intersection points, such that all the regions bounded by line segments are simplicial (i.e., tetrahedral), as illustrated in Fig.~\ref{fig:19.5}.
(Like in the previous section, this means that $\T_4$ is no longer a hyperplane-generated toric differential inclusion, but is now a general toric differential inclusion.)

\begin{figure}[h!]
  \begin{center}
    \begin{tabular}{c}
      \hskip-0.3cm
      \includegraphics[width=5.5in]{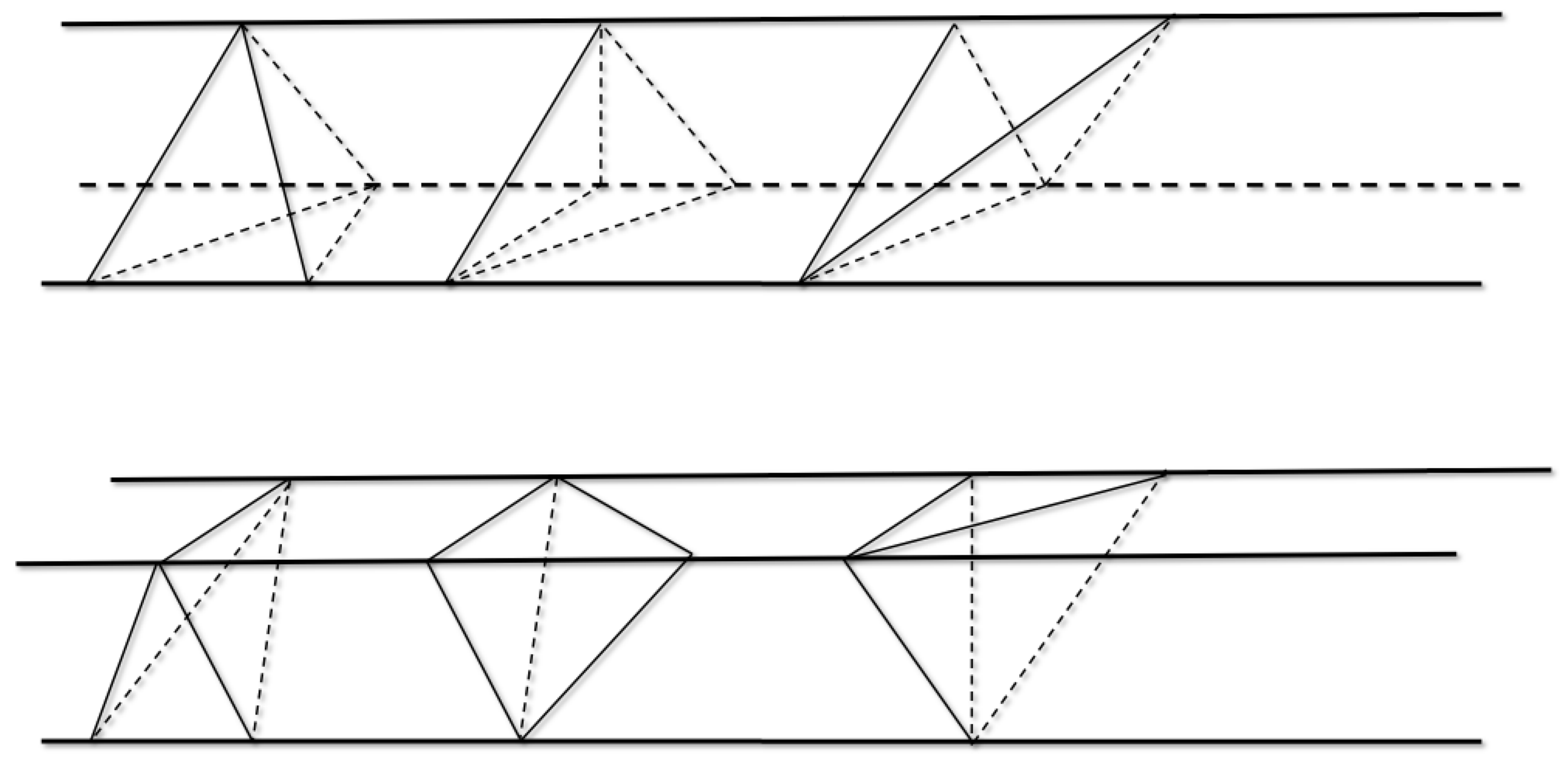}
    \end{tabular}
  \end{center}
  \caption{\label{fig:19.5}{\it A partition of $\D_3$, analogous to the partition of $\D_2$ from in the previous section, uses tetrahedra instead of triangles. Here we show only two representative ``tunnels" in this partition, and the six different types of tetrahedra that may occur, depending on the positions of the vertices of the front face and the back face. Note that in Fig.\ref{fig:19.2}(c) we also had two types of triangles (with the horizontal edge on the bottom or on the top). This difference is reflected here in the difference between the top three and the bottom three tetrahedra.}} 
\end{figure}

\bigskip

In order to describe a method that allows us to construct a zero-separating surface for $\T_4$, let us revisit the construction of a zero-separating surface for $\T_3$ in the previous section, to emphasize some key ideas.

If we look again at Fig.~\ref{fig:4}, we can interpret it as follows. On the left-side boundary of the domain $\D_2$ we had two types of curve patches, i.e., the {\em 1d-flat patches}\footnote{
By  {\em 1d-flat patches} we mean the parts of the left-side boundary curve near some vertices, which give rise to the shaded layers in the partition of $\D_2$ shown in Fig.~\ref{fig:4}. The proximity of the vertices impose that these curve patches are straight line segments. 
}, 
and the {\em 0d-flat patches}\footnote{
By  {\em 0d-flat patches} we mean the parts of the left-side boundary curve that are not close to vertices, and give rise to the white layers in the partition of $\D_2$ shown in Fig.~\ref{fig:4}. Some of these patches may actually be flat (i.e., straight line segments), but there is nothing in our construction that requires them to be flat.
}. 

Then, if the 1d-flat patches are short enough, and if the whole left-side boundary curve is faithful, then the construction described in the previous section is possible. 

We can think of this construction systematically as follows. For each 1d-flat  patch we have constructed a {\em polygonal line of 1d-flat patches}\footnote{
We call it a {\em polygonal line of 1d-flat patches} because we construct it in a way that is similar to a polygonal line, except that a polygonal line starts with a point, and then advances along some straight lines, while a ``polygonal line of 1d-flat patches" starts with a 1d-flat curve $\C_1$, then builds a surface patch by advancing along straight parallel lines until we reach another curve $\C_2$ at the intersection with some surface, then builds another surface patch by advancing along another set of straight parallel lines, and so on.
}, which is the image through the function $\phi$ of the corresponding shaded layer in the partition of $\D_2$ shown in Fig.~\ref{fig:4}. 
Similarly, for each 0d-flat patch we have constructed a {\em polygonal line of 0d-flat patches}.
For both types of polygonal lines of patches the construction is similar: the patches are continued from left to right in the attracting direction of a surface represented by a solid line segment, and they ``turn" in a different direction along the surface represented by a dotted line segment. 

A key property that makes this construction work is the fact that the two different types of polygonal lines of patches match perfectly along their common boundaries. On the other hand, we can see in Fig.~\ref{fig:4} that the polygonal line of 0d-flat patches ``turns" (i.e., encounters dotted lines) more times than the polygonal line of 1d-flat patches. The reason their boundaries still match is the following: even though the polygonal line of 1d-flat patches does not turn as many times (e.g., does not turn in the neighborhoods of points where solid lines intersect), its {\em boundary curves} do turn\footnote{
and even though the boundary curves turn, they still remain part of the same 1d-flat patch.
}.

Note that all we needed in the previous section in order to construct the polygonal lines of patches described above were the two different kinds of dotted lines. One way to think about our choice of a configuration of dotted lines as shown in Fig.~\ref{fig:4} is the following. 
We can first choose ``midpoints" for all the horizontal solid line segments, and decide that the thick dotted lines will pass through these midpoints. Once we have the thick dotted lines, we can also construct the intersections between them and the thin horizontal dotted lines. These intersection points are important, since the only other remaining dotted lines (the non-horizontal ones) pass through these points. We can construct these lines for example by connecting the intersection points with the triangle vertex that lies on the opposite side.

\bigskip

We now construct a zero-separating surface for $\T_4$ based on the idea of polygonal lines of patches.
We need to describe ``dotted surfaces" (i.e., surfaces that sub-partition $\D_3$, and  play the role that the dotted lines have played for sub-partitioning $\D_2$ in Fig.~\ref{fig:4}) for the tetrahedral partition of the three-dimensional domain $\D_3$ illustrated in Fig.~\ref{fig:tunnels_4d} and Fig.~\ref{fig:19.5}.

We construct the dotted surfaces as follows. First, like in the 3d case, we choose {\em midpoints}\footnote{we need 
these midpoints to be able to specify the turning points of the {\em polygonal lines of 2d-flat patches} that we will construct later.
} 
for all the horizontal line segments in the tetrahedral subdivision of $\D_3$. 
For example, in Fig.~\ref{fig:19.5} (where we show just two representative tunnels within $\D_3$) there will be one such midpoint on each line segment along the six horizontal half-lines shown there. Note that, in order to make this figure more clear, we have omitted many other tetrahedra that are also inside these same two tunnels (since, according to our construction, $\D_3$ is partitioned by tunnels, and each tunnel is partitioned by tetrahedra).

Then, once we have the midpoints, we also construct {\em mid-curves}\footnote
{
the mid-curves will specify the turning points of the {\em polygonal lines of 1d-flat patches}
}  
along the $X$-direction tetrahedral faces\footnote
{
The $X$-direction tetrahedral faces are faces that contain the direction of the $X$-axis. Each tetrahedron contains exactly two such faces, and they are adjacent to a common horizontal edge, so they both have an adjacent midpoint that we constructed previously. The other two faces are called {\em back face} and {\em front face}, as seen in the direction of the construction, i.e, in the $X$-direction. The back face and the front face intersect in an edge which is called the {\em hinge edge} of the tetrahedron.
}.
Each mid-curve connects a point on the boundary of its adjacent {\em midpoint turning patch}\footnote
{
a midpoint turning patch is the vertical patch that passes through a midpoint and along which the polygonal line of 2d-flat patches turns. Each $X$-direction tetrahedral face intersects exactly one midpoint turning patch.
}  
with the opposite vertex of the $X$-direction tetrahedral face that contains the mid-curve. 

Once we have specified the midpoints and the mid-curves as described above, there is a unique way to build the polygonal lines of 2d-flat patches and the polygonal lines of 1d-flat patches. 

Finally, we construct a {\em mid-surface}\footnote
{the mid-surface of a tetrahedron is a surface patch that lies approximately half-way between the back face and the front face of the tetrahedron, and it connects the hinge edge with the boundaries of the polygonal lines of 1d-flat patches and 2d-flat patches that intersect that tetrahedron. 
} within each tetrahedron, and it gives us a unique way to build the polygonal lines of 0d-flat patches, and therefore finish the construction of a zero-separating surface for the toric differential inclusion $\T_4$. 

As a final remark, let us also notice that there is another way to describe the above construction, as follows. For each tetrahedron in the subdivision of $\D_3$, we choose a midpoint of its {\em advance edge}\footnote
{the advance edge of a tetrahedron is its only edge in the horizontal direction (i.e., in the direction of the $X$-axis). 
} 
and then construct two ``dotted lines" that start at this midpoint and go into the two faces of the tetrahedron that contain the advance edge \footnote{these two faces are the horizontal faces of the tetrahedron.}. Then the ``dotted surface" of the tetrahedron is a triangular surface~\footnote{it is triangle-shaped, but it is not flat.} that partitions the tetrahedron into two parts\footnote
{
Recall that the dotted surface has to serve as ``turning surface" for the three kinds of polygonal lines of  patches. The dotted surface has to partition the tetrahedron into two parts, one containing the back face, and one containing the front face. Then we have a ``back half" and a ``front half" of the tetrahedron, and in the back half the polygonal lines of  patches advance in the attracting direction of the back face, and in the front half the polygonal lines of  patches advance in the attracting direction of the front face.
}. 
Two edges of the dotted surface are the dotted lines we mentioned above, and the third edge is the hinge edge of the tetrahedron. The dotted surface fills in the space in between its three edges according to the following rules: $(i)$ the {\em midpoint} must be contained in the {\em midpoint turning patch} of the polygonal lines of 2d-flat patches, and the part of the midpoint turning patch that is contained in this tetrahedron must be contained in the dotted surface, $(ii)$ the {\em mid-curve} must be contained in the {\em mid-curve turning patch} of the polygonal lines of 1d-flat patches, and the part of the mid-curve turning patch that is contained in this tetrahedron must be contained in the dotted surface, and $(iii)$ the dotted surface is ``vertical enough", in the sense that approximately horizontal line segments intersect it in at most one point.

Condition $(i)$ is trivial to enforce. To enforce condition $(ii)$ we have to make sure that we do not allow the two mid-curve turning patches to intersect near the hinge edge. Finally, notice that (by moving the construction of the zero-separating surface close enough to the origin) we can assume that the turning patches as as thin as we wish, so the fitted surface is as close to the half-distance triangle between the back face and the front face. Therefore condition $(iii)$ can also be satisfied.

\bigskip

Our next focus will be to construct a {\em faithful} zero-separating surface\footnote
{
because it will be the starting point for a construction of zero-separating surface for a 5d toric differential inclusion~$\T_5$, in the next section.
} 
for the 4d toric differential inclusion $\T_4$.  

Recall that we have constructed a faithful zero-separating surface
for the 3d toric differential inclusion $\T_3$ in the previous section, essentially by building a slightly smaller triangle inside each triangle of the subdivision of $\D_2$, in a special way that makes all of the vertices of these smaller triangles align with each-other along turning patches and along some attracting directions (see Fig.~\ref{fig:19.5}).

Surprisingly, although this simple approach worked well in $\RR^3$, {\em it fails to work}\footnote
{
It fails because the system of restrictions given by these alignments is overdetermined in $\RR^4$. 
} in $\RR^4$. 
Therefore, we need a new way of constructing a {\em faithful} zero-separating surface for $\T_4$, which will also generalize to higher dimensions. 

A method that accomplishes this is the following. After we have subdivided the domain $\D_3$ of $\T_4$ into tetrahedra, we {\em further subdivide} each tetrahedron by choosing several points along its advance edge and connecting them with its hinge edge, and making sure that the first such point is very close to the back face, the last such point is very close to the front face, and there is at least one more point between them.

Denote this new polygonal fan by $\F_4'$, and its associated toric differential inclusion by $\T_4'$, and consider a zero-separating surface for $\T_4'$ constructed close enough to the origin. 
Then, for each tetrahedron of $\T_4$, the normals to this zero-separating surface for $\T_4'$ at points that are not in any uncertainty region of $\T_4$ belongs to the {\em interior} of the attracting cone of that tetrahedron of $\T_4$. Therefore, this zero-separating surface for $\T_4'$ is a {\em faithful} zero-separating surface for $\T_4$.

\subsection{General ($n$-dimensional) toric differential inclusions} 

The constructions we have described in the previous sections can be carried on analogously in higher dimensions.
To build a zero-separating surface for an $n$-dimensional toric differential inclusion $\T_n$, we first subdivide the simplicial domain
$$\{ X_1 + X_2 + ... + X_n = -1 \} \cap \RR_-^n$$
into $n$! subdomains given by specific orderings of the logarithmic coordinates $X_1, ..., X_n$. Then we take the representative subdomain 
$$\{ X_1 \le X_2 \le ... \le X_n \} \cap \{ X_1 + X_2 + ... + X_n = -1 \} \cap \RR_-^n$$
and map it to the unbounded simplicial domain  $\D_{n-1} \subset \RR^{n-1}_+$.

Then we subdivide $\D_{n-1}$ into $(n-1)$-dimensional simplices (similar to our decomposition of $\D_2$ into triangles and of $\D_3$ into tetrahedra) and assume that we have already constructed a ``left-side boundary condition" for a zero separating surface on $\D_{n-1}$\footnote
{
If we partition $\D_{n-1}$ into half-lines in the direction of the $X_1$-axis, then the ``left-side boundary" of $\D_{n-1}$ is the set of boundary points of these half-lines. The left-side boundary of $\D_{n-1}$ is essentially a copy of $\D_{n-2}$, and the zero-separating surface constructed along it is based on previously constructed {\em faithful} zero-separating surface for a $(n-1)$-dimensional toric differential inclusion.
}.

The simplices of $\D_{n-1}$ can be organized along ``tunnels" that start at simplices on the left-side boundary of $\D_{n-1}$ and go in the $X_1$-direction. This provides a lexicographic-type {\em ordering} of all the simplices of $\D_{n-1}$, based on the ordering of simplices of $\D_{n-2}$ and the order of increasing $X_1$ values in simplices within the same tunnel in $\D_{n-1}$.

Then, we build a zero-separating surface for $\T_n$ by constructing one piece of the surface for each simplex of $\D_{n-1}$, in the order described above. The construction is based on {\em polygonal lines of patches}, like in the previous section. The turning sets for the polygonal lines of patches are given by a ``dotted surface" constructed like in the previous section: for each simplex of $\D_{n-1}$, we first choose a midpoint of its advance edge, then we construct boundary patches of the dotted surface on all the facets\footnote
{
this relies on the construction of the dotted surface in the $(n-1)$-dimensional case
} 
of the simplex that contain the advance edge, and then connect these boundary patches with the codimension-two face that is opposite the advance edge. 

Finally, we can also obtain {\em faithful} zero-separating surfaces for $\T_{n}$, by subdividing each simplex of $\D_{n-1}$ into several simplices, given by a choice of at least three points along the advance edge of the simplex, as explained in the previous section. Faithful zero-separating surfaces for $n$-dimensional toric differential inclusions serve as left-side boundary conditions for zero-separating surfaces for $(n+1)$-dimensional toric differential inclusions, and so on.

\section{Proof of the global attractor conjecture} 

Consider a toric dynamical system\footnote{i.e., a dynamical system of the form~(\ref{polynomial}) that has a positive equilibrium that satisfies the vertex-balance identity~(\ref{vertex_balanced_equil}).} ${\bf T}_n$ in $\RR_+^n$, and fix some $0<\eps<<1$. From the work of Horn and Jackson~\cite{Horn_Jackson} we know that ${\bf T}_n$ has a globally defined strict Lyapunov function within any linear invariant subspace. Also, we know~\cite{TDS} that there exist level sets of this Lyapunov function that allow us to build an invariant region ${\cal R}_n^0$ for ${\bf T}_n$ in $\RR_+^n$ such that $$\{x \in {\cal R}_n^0 | dist(x,0) < \eps_0\} = \emptyset \text{ and } \{x \in {\cal R}_n^0 | dist(x,0) >1/ \eps_0\} = \emptyset,$$ for some $\eps_0>0$, and such that the hypercube $\H_\eps = [\eps, \frac{1}{\eps}]^n$ is contained inside ${\cal R}_n^0$.

Then we proceed as in the Section~7 of~\cite{CNP}, and build an invariant region ${\cal R}_n^1 \subset {\cal R}_n^0$ that also contains $\H_\eps$ and does not have any points at distance less than  $\eps_1>0$ from the {\em coordinate axes} of $\RR_+^n$, by using the fact that, in a neighborhood of each coordinate axis of $\RR_+^n$, the $n$-dimensional toric dynamical system ${\bf T}_n$ restricted to ${\cal R}_n^0$ can be regarded as an $(n-1)$-dimensional $k$-variable toric dynamical system. This allows us construct ${\cal R}_n^1$ by excluding $n$ cylindrical neighborhoods of the coordinate axes from ${\cal R}_n^0$. See also Fig.7.1 in~\cite{CNP}.

In the next step we build an invariant region ${\cal R}_n^2 \subset {\cal R}_n^1$ that contains $\H_\eps$ and does not have any points at distance less than  $\eps_2>0$ from the coordinate {\em planes} of $\RR_+^n$, and then we build an invariant region ${\cal R}_n^3 \subset {\cal R}_n^2$ that contains $\H_\eps$ and does not have any points at distance less than  $\eps_3>0$ from the coordinate 3{\em -spaces} of $\RR_+^n$, and so on.

In the end\footnote
{
Note that we can use the same construction as above to show that any bounded trajectory of a $k$-variable toric dynamical system is persistent (and, in particular, any bounded trajectory of a $k$-variable weakly reversible mass-action dynamical system is persistent), except that we also have to construct ${\cal R}_n^0$ using toric differential inclusions, instead of using the Lyapunov function. We will discuss this and more general results about persistence and permanence of $k$-variable toric dynamical systems in future work~\cite{Craciun_2015, Craciun_future}.
}   
we obtain a bounded invariant region ${\cal R}_n^{n-1}$ for ${\bf T}_n$ that contains $\H_\eps$ and does not contain any points at distance less than  $\eps_{n-1}>0$ from the boundary of $\RR_+^n$. Therefore, for each initial condition $x_0 \in \RR_+^n$ we can construct an invariant region ${\cal R}_n^{n-1}$ above such that $x_0 \in {\cal R}_n^{n-1}$.
Then we can use LaSalle invariance~\cite{siegel_maclean, Sontag1} for the global strict Lyapunov function mentioned above to conclude that, within any linear invariant subspace, any trajectory of ${\bf T}_n$ converges to its vertex-balanced positive equilibrium point.


\section*{Acknowledgements} 
We thank Uri Alon, David Anderson, Murad Banaji, Lev Borisov, Andrei Caldararu, Martin Feinberg, Manoj Gopalkrishnan, Jeremy Gunawardena, Morris Hirsch, Heinz Koeppl, 
Ezra Miller,
Fedor Nazarov, Casian Pantea, 
Joel Robbin,
Anne Shiu, Eduardo Sontag, Frank Sottile, and Bernd Sturmfels for very useful comments and discussions.

\bigskip

\bigskip

\bigskip

\end{document}